\def \myweightstyle {}
\def \myweightx {0.09}
\def \myweighty {0.2}
\newcommand{\mydrawdown}[1]%
  {\draw [\myweightstyle] #1 -- ++(\myweightx,\myweighty);%
    \draw [\myweightstyle] #1 -- ++(-\myweightx,\myweighty);}
\newcommand{\mydrawup}[1]%
  {\draw [\myweightstyle] #1 -- ++(-\myweightx,-\myweighty);%
    \draw [\myweightstyle] #1 -- ++(\myweightx,-\myweighty);}
\tikzset{anchorbase/.style={baseline={([yshift=-0.5ex]current bounding box.center)}},
  int/.style={thick},
  cross line/.style={preaction={draw=white,line width=6pt,-}},
  wall/.style={thin,double,blue},
  middlearrow/.style={postaction=decorate,decoration={markings,mark=at
    position .55 with {\arrow{stealth};}}},
  middlearrowrev/.style={postaction=decorate,decoration={markings,mark=at
    position .55 with {\arrowreversed{stealth};}}},
  ev/.style={shape=rectangle, draw}
}
\DeclareSymbolFontAlphabet{\mathbb}{AMSb}
\DeclareSymbolFontAlphabet{\mathbbol}{bbold}
\DeclareMathAlphabet{\mathpzc}{OT1}{pzc}{m}{it}
\DeclareSymbolFont{usualmathcal}{OMS}{cmsy}{m}{n}
\DeclareSymbolFontAlphabet{\mathucal}{usualmathcal}
\numberwithin{equation}{section}
\newtheoremstyle{myplain} {6pt plus 6pt minus 2pt}
{6pt plus 6pt minus 2pt}
{\itshape}
{}
{\bfseries}
{.}
{.5em}
{}
\theoremstyle{myplain}
\newtheorem{theorem}{Theorem}[section]
\newtheorem*{theorem*}{Theorem}
\newtheorem{lemma}[theorem]{Lemma}
\newtheorem{prop}[theorem]{Proposition}
\newtheorem{corollary}[theorem]{Corollary}
\newtheoremstyle{mydefinition} {6pt plus 6pt minus 2pt}
{6pt plus 6pt minus 2pt}
{\itshape}
{}
{\bfseries}
{.}
{.5em}
{}
\theoremstyle{mydefinition}
\newtheorem{definition}[theorem]{Definition}
\newtheoremstyle{myexample} {6pt plus 6pt minus 2pt}
{6pt plus 6pt minus 2pt}
{}
{}
{\scshape}
{.}
{.5em}
{}
\theoremstyle{myexample}
\newtheoremstyle{myremark} {6pt plus 6pt minus 2pt}
{6pt plus 6pt minus 2pt}
{}
{}
{\scshape}
{.}
{.5em}
{}
\theoremstyle{myremark}
\newtheorem{remark}[theorem]{Remark}
\newcommand{\twomatrix}[4]{\ensuremath\begin{pmatrix} #1 & #2 \\ #3 & #4 \end{pmatrix}}
\newcommand{\Z}{\mathbb{Z}}
\newcommand{\Q}{\mathbb{Q}}
\newcommand{\R}{\mathbb{R}}
\newcommand{\C}{\mathbb{C}}
\newcommand{\gl}{\mathfrak{gl}}
\newcommand{\suchthat}{\mid} 
\newcommand{\mapto}{\rightarrow}
\DeclareMathOperator{\Hom}{Hom}
\DeclareMathOperator{\End}{End}
\DeclareMathOperator{\Ann}{Ann}
\newcommand{\id}{\mathrm{id}}
\newcommand{\Id}{\mathrm{Id}}
\newcommand{\abs}[1]{\left|#1\right|}
\renewcommand{\epsilon}{\varepsilon}
\renewcommand{\phi}{\varphi}
\newcommand{\down}{{\mathord\vee}}
\newcommand{\sfP}{{\mathsf{P}}}
\newcommand{\frakh}{{\mathfrak{h}}}
\newcommand{\bolds}{{\boldsymbol{s}}}
\newcommand{\boldt}{{\boldsymbol{t}}}
\newcommand{\boldell}{{\boldsymbol{\ell}}}
\newcommand{\fieldK}{\mathbb{K}}
\newcommand{\unit}{\boldsymbol{1}}
\newcommand{\mult}{\nabla}
\newcommand{\hotimes}{\mathop{\hat{\otimes}}}
\newcommand{\Uqgl}{U_q}
\newcommand{\Uhgl}{U_\hbar}
\renewcommand{\epsilon}{\varepsilon}
\newcommand{\quantumq}{{\boldsymbol{\mathrm{q}}}}
\newcommand{\oDelta}{{\overline{\Delta}}}
\newcommand{\counit}{\boldsymbol{\mathrm{u}}}
\newcommand{\op}{\mathrm{op}}
\newcommand{\ev}{\mathrm{ev}}
\newcommand{\coev}{\mathrm{coev}}
\title{The Alexander polynomial as quantum invariant of links}
\author{Antonio Sartori}
\address{Mathematisches Institut\\Endenicher Allee 60\\Universit\"at Bonn\\53115 Bonn, Germany}
\email{sartori@math.uni-bonn.de}
\urladdr{http://www.math.uni-bonn.de/people/sartori}
\date{\today}
\keywords{Lie superalgebras, $\gl(1|1)$, Alexander polynomial, Quantum enveloping superalgebras, Quantum invariants.}
\thanks{This work has been supported by the Graduiertenkolleg 1150, funded by the Deutsche Forschungsgemeinschaft.}
\begin{document}

\begin{abstract}
In these notes we collect some results about finite-dimensional representations of $U_q(\gl(1|1))$ and related invariants of framed tangles which are well-known to experts but difficult to find in the literature. In particular, we give an explicit description of the ribbon structure on the category of finite-dimensional $U_q(\gl(1|1))$-representation and we use it to construct the corresponding quantum invariant of framed tangles. We explain in detail why this invariant vanishes on closed links and how one can modify the construction to get a nonzero invariant of framed closed links. Finally we show how to obtain the Alexander polynomial by considering the vector representation of $U_q(\gl(1|1))$.
\end{abstract}
\maketitle

\setcounter{tocdepth}{1}
\microtypesetup{protrusion=false}
\tableofcontents
\microtypesetup{protrusion=true}

\section{Introduction}
\label{sec:introduction}

The Alexander polynomial is a classical invariant of links in $\R^3$, defined first in the 1920s by Alexander
\cite{MR1501429}. Constructed originally in combinatorial terms, it can be defined also in modern language using the homology of a
cyclic covering of the link complement (see for example
\cite{MR1472978}).

The Alexander polynomial can also be defined using the Burau
representation of the braid group (see for example \cite[Chapter
3]{MR2435235}). As well-known to experts, this representation can be
constructed using a solution of the Yang-Baxter equation, which comes
from the action of the $R$-matrix of $U_q(\gl(1|1))$
\cite{MR1133269}  (or alternatively of $U_q(\mathfrak{sl}_2)$ for
$q$ a root of unity; see \cite{MR2255851} for the parallel between
$\gl(1|1)$ and $\mathfrak{sl}_2$).

In other words, the key-point of the construction is the
\emph{braided} structure of the monoidal category of finite-dimensional representations of $U_q(\gl(1|1))$, that is, there is an
action of an $R$-matrix satisfying the braid relation. This can
obviously be used to construct representations of the braid
group. Considering tensor powers of the vector representation of
$U_q(\gl(1|1))$, one obtains in this way the Burau representation of
the braid group. Given a representation of the braid group, one can
extend it to an invariant of links considered as closures of braids by
defining a Markov trace.

In these notes, we exploit this construction a bit further, proving
that the category of finite-dimensional
$U_q(\gl(1|1))$-representations is not only braided, but actually
\emph{ribbon}.  A ribbon category is exactly what one needs to use the
Reshetikhin-Turaev construction \cite{MR1036112} to get invariants of
oriented framed tangles. The advantage of the ribbon structure is that
one can consider arbitrary diagrams of links, and not just braid
diagrams.

To construct a ribbon structure on the category of modules
over some algebra, a possible strategy is to prove that the
algebra is actually a ribbon Hopf algebra. Unfortunately,
similarly to the case of a classical semisimple Lie algebra,
the Hopf algebra $U_q(\gl(1|1))$ is not ribbon. We consider
hence another version of the quantum enveloping algebra,
which we call $U_\hbar(\gl(1|1))$, and which is a
topological algebra over $\C[[\hbar]]$. Roughly speaking,
the relation between $U_q(\gl(1|1))$ and $U_\hbar(\gl(1|1))$
is given by setting $q=e^\hbar$. The price for working with
power series pays off, since $U_\hbar(\gl(1|1))$ is in fact
a ribbon Hopf algebra. By a standard argument, we see that
the $R$-matrix and the ribbon element of $U_\hbar(\gl(1|1))$
act on finite-dimensional representations of $U_q(\gl(1|1))$
and deduce hence the ribbon structure of this category.

Given an oriented framed tangle $T$ and a labeling $\boldell$ of the
strands of $T$ by finite-dimensional irreducible
$U_q(\gl(1|1))$-representations, we get then an invariant
$Q^\boldell(T)$, which is a certain $U_q(\gl(1|1))$-equivariant map. In
particular, restricting to oriented framed links (viewed as special
cases of tangles), we obtain a $\C(q)$-valued invariant.

If we label all the strands by the vector representation of
$U_q(\gl(1|1))$, an easy calculation shows that the corresponding
invariant of oriented framed tangles is actually independent of the
framing and hence is an invariant of oriented tangles (as is
well-known, the same happens for the ordinary $\mathfrak{sl}_n$-invariant).

Unfortunately, when considering invariants of closed links, there is a
little problem we have to take care of. Namely, it follows from the
fact that the category of finite-dimensional $U_q(\gl(1|1))$-modules
is not semisimple (this is true even in the non-quantized case and well-known, see for example \cite{MR2881300} where the blocks of the category of finite-dimensional $\gl(m|n)$-representations are studied in detail) that
the invariant $Q^\boldell(L)$ is zero for all closed links $L$ (see
Proposition \ref{prop:3}). The work-around to avoid this problem is to
choose a strand of the link $L$, cut it and consider the invariant of
the framed $1$-tangle that is obtained in this way (Theorem
\ref{thm:2}). The resulting invariant will be an element of the
endomorphism ring of an irreducible representation (the one that
labels the strand being cut); since this ring can be naturally
identified with $\C(q)$, the invariant that we obtain in this way is
actually a rational function. The construction does not depend on the
strand we cut, but rather on the representation labeling the
strand. In particular for a constant labeling $\boldell$ of all the
components of $L$ we get a true invariant of framed links.

Applying this construction to the constant labeling by the vector representation, one obtains as before an invariant of links. In fact, it is easy to prove that this coincides with the Alexander polynomial (see Theorem \ref{thm:3}).

\medskip The structure of these notes is the following. In Section
\ref{sec:quant-envel-super} we define the quantum enveloping
superalgebras $\Uqgl$ and $\Uhgl$ and explicitly describe the
ribbon structure of the latter. In Section \ref{sec:representations}
we study in detail the category of finite-dimensional representations
of $U_q$. In Section \ref{sec:invariant-links} we construct the
invariants of framed tangles and of links and finally recover the
Alexander polynomial as a consequence of the construction. In the
appendix we collect two technical results about $\Uhgl$.

We want to stress that the content of these short notes is 
well-known to experts. Relations between $U_q(\gl(1|1))$, or more generally $U_q(\gl(n|n))$, and the Alexander polynomial have been noticed, studied and generalized by lots of authors (see for example \cite{MR1034395}, \cite{MR1160372}, \cite{MR1133269}, \cite{MR1371053}, \cite{MR2153122}, \cite{MR2311186}, \cite{MR2640994}). In particular, almost everything we write here
is a special case of what is analyzed in \cite{MR2255851}, where
quantum $\gl(1|1)$- and $\mathfrak{sl}_2$-invariants associated to
arbitrary coloring of tangles are studied in detail. Our aim is to
provide, from a purely representation theoretical point of view, a
short but complete and self-contained explanation of how the Alexander
polynomial arises as quantum invariant corresponding to the vector
representation of $U_q(\gl(1|1)$, including a full proof of the
ribbon structure of $U_q(\gl(1|1))$.

\subsubsection*{Acknowledgements} The present work is part of the author's PhD thesis. The author would like to thank his advisor Catharina Stroppel for her help and support. The author would also like to thank the anonymous referee for many helpful comments.

\section{The quantum enveloping superalgebras \texorpdfstring{$\Uqgl$}{Uq(gl(1|1))} and \texorpdfstring{$\Uhgl$}{Uh(gl(1|1))}}
\label{sec:quant-envel-super}

We recall the definition of the quantum enveloping algebra $\Uqgl=U_q(\gl(1|1))$ and of its Hopf superalgebra structure. We define then the $\hbar$-version $U_\hbar=U_\hbar(\gl(1|1))$ and prove that it is a ribbon Hopf superalgebra.

In the following, as usual, by a \emph{super}object (for example vector space, algebra, Lie algebra, module) we will mean a $\Z/2\Z$-graded object. If $X$ is such a superobject we will use the notation $\abs{x}$ to indicate the degree of a homogeneous element $x \in X$. Elements of degree $0$ are called \emph{even}, while elements of degree $1$ are called \emph{odd}.
 We stress that whenever we write $\abs{x}$ we will always be assuming $x$ to be homogeneous.

Throughout the section we will use some standard facts about Hopf superalgebras. The analogous statements in the non-super setting can be found for example in \cite{MR1300632}, \cite{MR1321145}, \cite{MR1881401}. The proofs carry directly over to the super case.

\subsection{The Lie superalgebra \boldmath\texorpdfstring{$\gl(1|1)$}{gl(1|1)}}
\label{sec:lie-super-gl11}

Let $\C^{1|1}$ be the 2-dimensional complex vector space on basis
$u_0$, $u_1$ viewed as a super vector space by setting $\abs{u_0}=0$
and $\abs{u_1}=1$.  The space of linear endomorphisms of $\C^{1|1}$ inherits a $\Z/2\Z$-grading and turns into a Lie superalgebra $\gl(1|1)$ with the supercommutator
\begin{equation}
  \label{eq:1}
  [a,b] = ab - (-1)^{\abs{a}\abs{b}} ba.
\end{equation}

As a Lie superalgebra, $\gl(1|1)$ is four-dimensional and generated by the elements
\begin{equation}
  \label{eq:2}
  h_1 = \twomatrix{1}{0}{0}{0}, \quad h_2= \twomatrix{0}{0}{0}{1}, \quad e = \twomatrix{0}{1}{0}{0}, \quad f= \twomatrix{0}{0}{1}{0}
\end{equation}
whit $\abs{h_1}=\abs{h_2}=0$ and $\abs{e}=\abs{f}=1$, 
subject to the defining relations
\begin{equation}
  \label{eq:3}
  \begin{aligned}
    [h_1,e]&=e,& [h_2,e]&=-e, &[h_2,f]&=f, &[h_1,f]&=-f,\\
    [h_1,h_2]& = 0,&  [e,f]&=h_1+h_2,& [e,e]&=0,& [f,f]&=0.
  \end{aligned}
\end{equation}

Let $\frakh \subset \gl(1|1)$ be the Cartan subalgebra consisting of
all diagonal matrices.  In $\frakh^*$ let $\epsilon_1,\epsilon_2$ be
the basis dual to $h_1,h_2$. On $\frakh^*$ we define a non-degenerate
symmetric bilinear form by setting on the basis
\begin{equation}
  \label{eq:47}
  (\epsilon_i,\epsilon_j) =
  \begin{cases}
    1 & \text{if } i=j=1,\\
    -1 & \text{if } i=j=2,\\
    0 & \text{if } i \neq j.
  \end{cases}
\end{equation}
The \emph{roots} of $\gl(1|1)$ are $\alpha=\epsilon_1 - \epsilon_2$
and $-\alpha$; we choose $\alpha$ to be the positive \emph{simple
  root}. Denote by $\sfP=\Z\epsilon_1 \oplus \Z\epsilon_2 \subset
\frakh^*$ the \emph{weight lattice} and by $\sfP^*=\Z h_1 \oplus \Z
h_2 \subset \frakh$ its dual.

\subsection{The quantum enveloping superalgebra}
\label{sec:quant-envel-super-1}

The \emph{quantum enveloping superalgebra} $U_q=U_q(\gl(1|1))$ is defined to be
the unital superalgebra over $\C (q)$ with generators $E$, $F$, $\quantumq^h\,
(h \in \sfP^*)$ in degrees $\abs{\quantumq^h}=0$, $\abs{E}=\abs{F}=1$ subject to the relations
\begin{equation}
\begin{gathered}
  \quantumq^0=1, \qquad \quantumq^h \quantumq^{h'} = \quantumq^{h+h'} \qquad \text{for }h,h' \in \sfP^*,\\
  \quantumq^h E=q^{\langle h, \alpha \rangle} E \quantumq^h, \qquad \quantumq^h F=q^{-\langle h,\alpha \rangle}F\quantumq^h \qquad \text{for }h \in \sfP^*,\\
  EF+FE= \frac{K-K^{-1}}{q-q^{-1}} \qquad \text{where }K=\quantumq^{h_1+h_2},\\
  E^2=F^2=0.
\end{gathered}\label{reps:eq:1}
\end{equation}
The elements $\quantumq^h$, which for the moment are formal symbols, can be interpreted in terms of exponentials in $U_\hbar(\gl(1|1))$ (see following). Notice that all elements $\quantumq^h$ for $h \in \sfP^*$ are linear combination of $\quantumq^{h_1}$ and $\quantumq^{h_2}$, so that $\Uqgl$ is finitely generated. Note also that $K$ is a central element of $\Uqgl$, very much in contrast to $U_q(\mathfrak{sl}_2)$.

\subsection{Hopf superalgebras}
\label{sec:hopf-superalgebras}

We recall that if $A$ is a superalgebra then $A \otimes A$
can be given a superalgebra structure by declaring $(a \otimes b)(c
\otimes d) = (-1)^{\abs{b}\abs{c}} ac \otimes bd$. If $M$
and $N$ are $A$-supermodules, then $M \otimes N$ becomes an $(A \otimes
A)$-supermodule with action $(a \otimes b)\cdot (m \otimes n)=
(-1)^{\abs{b} \abs{m}} am \otimes bn$ for $a,b \in A$, $m\in M$, $n
\in N$.

A \emph{super bialgebra} $B$ over a field $\fieldK$ is then a unital superalgebra which is also a coalgebra, such that the counit $\counit: B \mapto \fieldK$ and the comultiplication $\Delta: B \mapto B \otimes B$ are homomorphism of superalgebras (and are homogeneous of degree $0$).
A \emph{Hopf superalgebra} $H$ is a super bialgebra equipped with a $\fieldK$-linear \emph{antipode} $S: H \mapto H$ (homogeneous of degree $0$) such that the usual diagram
\begin{equation}
  \label{eq:50}
  \begin{tikzpicture}[baseline=(current bounding box.center)]
  \matrix (m) [matrix of math nodes, row sep=2em, column
  sep=5.5em, text height=1.5ex, text depth=0.25ex] {
    H \otimes H & H & H \otimes H \\
    & \fieldK & \\
    H \otimes H & H & H \otimes H\\};
  \path[->] (m-1-2) edge node[above] {$ \Delta $} (m-1-1);
  \path[->] (m-1-2) edge node[auto] {$ \Delta $} (m-1-3);
  \path[->] (m-3-1) edge node[auto] {$ \mult $} (m-3-2);
  \path[->] (m-3-3) edge node[above] {$ \mult $} (m-3-2);
  \path[->] (m-1-1) edge node[left] {$ S \otimes \id $} (m-3-1);
  \path[->] (m-1-3) edge node[auto] {$ \id \otimes S $} (m-3-3);
  \path[->] (m-1-2) edge node[auto] {$ \counit $} (m-2-2);
  \path[->] (m-2-2) edge node[auto] {$ \unit $} (m-3-2);
\end{tikzpicture}
\end{equation}
commutes, where $\mult: H \otimes H \mapto H$ and $\unit: \fieldK \mapto H$ are the multiplication and unit of the algebra structure.

If $H$ is a Hopf superalgebra and $M$, $N$ are (finite-dimensional) $H$-supermodules then the comultiplication
$\Delta$ makes it possible to give $M \otimes N$ an
$H$-module structure by letting
\begin{equation}
x \cdot (m \otimes n) = \Delta(x) (m \otimes n) = \sum_{(x)} (-1)^{\abs{x_{(2)}} \abs{\vphantom{x_{(2)}}m}} x_{(1)} m \otimes x_{(2)} n \label{eq:51}
\end{equation}
for $x \in H$, $m \otimes n \in M \otimes N$, where we use Sweedler notation $\Delta(x) = \sum_{(x)} x_{(1)} \otimes x_{(2)}$. Notice in particular that  signs appear.
The antipode $S$, moreover, allows to turn $M^* = \Hom_\fieldK(M, \fieldK)$
into an $H$-module via
\begin{equation}
(x \phi) (v) = (-1)^{\abs{\phi} \abs{x}} \phi (S(x)v)\label{eq:52}
\end{equation}
 for $x
\in H$, $\phi \in M^*$. Again, notice that a sign appears. A good rule to keep in mind is that
a sign appears whenever an odd element steps over some other
odd element. A good reference for sign issues is
\cite[Chapter 3]{MR1632008}.

\subsection{The Hopf superalgebra structure on \boldmath$U_q$}
Let us now go back to $\Uqgl$. We define a \emph{comultiplication} $\Delta: U_q \mapto U_q \otimes U_q $, a \emph{counit} $\counit: U_q \mapto \C(q)$ and an \emph{antipode} $S: U_q \mapto U_q$ by setting on the generators
\begin{equation}
\begin{aligned}
  \Delta(E)&= E \otimes K^{-1}+1 \otimes E, & \Delta(F)&=F \otimes 1 + K \otimes F,\\
  S(E)&=-EK, & S(F)&=- K^{-1}F,\\
  \Delta(\quantumq^h)&=\quantumq^h \otimes \quantumq^h, &   S(\quantumq^h)&=\quantumq^{-h},\\
  \counit(E)& =\counit(F)=0, & \counit(\quantumq^h)&=1,
\end{aligned}\label{reps:eq:2}
\end{equation}
and extending $\Delta$ and $\counit$ to algebra homomorphisms and $S$ to an algebra anti-homomorphism. We have then:
\begin{prop}
  \label{prop:1}
  The maps $\Delta$, $\counit$ and $S$ turn $U_q$ into a Hopf superalgebra.
\end{prop}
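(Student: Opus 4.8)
The plan is to verify directly that the structure maps $\Delta$, $\counit$, and $S$ defined on generators in \eqref{reps:eq:2} satisfy the bialgebra and Hopf axioms. The overall strategy is standard: since $\Delta$ and $\counit$ are declared to be algebra homomorphisms and $S$ an algebra anti-homomorphism, each axiom is an equation between superalgebra (anti-)homomorphisms $U_q \to U_q^{\otimes k}$, so it suffices to check it on the generators $E$, $F$, and $\quantumq^h$. There are, however, two preliminary points that must be dispatched first, and these are where the real content lies.

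First I would check that the maps are \emph{well-defined}, i.e.\ that $\Delta$, $\counit$, and $S$ respect the defining relations \eqref{reps:eq:1} of $U_q$. For $\counit$ this is immediate. For $\Delta$ the only nontrivial relations are $EF + FE = (K - K^{-1})/(q - q^{-1})$ and $E^2 = F^2 = 0$; I would compute $\Delta(E)\Delta(F) + \Delta(F)\Delta(E)$ using the sign rule $(a \otimes b)(c \otimes d) = (-1)^{\abs{b}\abs{c}} ac \otimes bd$ for the algebra structure on $U_q \otimes U_q$, being careful that $E$ and $F$ are odd so that crossing them past each other introduces a sign, and verify that the cross terms cancel and the result is $\Delta$ of the right-hand side (using that $K$ is central and $\Delta(K) = K \otimes K$). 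The computation $\Delta(E)^2 = 0$ likewise uses $E^2 = 0$ together with the odd sign in the tensor product multiplication. For $S$, since it is an anti-homomorphism, I would verify $S(F)S(E) + S(E)S(F) = S\!\left((K - K^{-1})/(q - q^{-1})\right)$ and $S(E)^2 = S(F)^2 = 0$; here the relation $\quantumq^h E = q^{\langle h, \alpha\rangle} E \quantumq^h$ is needed to move the $K$-factors past $E$ and $F$.

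Next I would verify \textbf{coassociativity} $(\Delta \otimes \id)\Delta = (\id \otimes \Delta)\Delta$ and the \textbf{counit axiom} $(\counit \otimes \id)\Delta = \id = (\id \otimes \counit)\Delta$, both of which reduce to checking on $E$, $F$, $\quantumq^h$. The grouplike elements $\quantumq^h$ are trivial; for $E$ one computes both sides of coassociativity as elements of $U_q^{\otimes 3}$ and finds $E \otimes K^{-1} \otimes K^{-1} + 1 \otimes E \otimes K^{-1} + 1 \otimes 1 \otimes E$, and similarly for $F$. Finally I would check the \textbf{antipode axiom}, namely that the diagram \eqref{eq:50} commutes: $\mult \circ (S \otimes \id) \circ \Delta = \unit \circ \counit = \mult \circ (\id \otimes S) \circ \Delta$. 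On $E$ this reads $S(E)K^{-1} + S(1)E = -EK\cdot K^{-1} + E = 0 = \counit(E)\unit$, and analogously on $F$ and $\quantumq^h$.

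The main obstacle is bookkeeping the Koszul signs consistently throughout, since $E$ and $F$ are odd: the signs enter in the multiplication on $U_q \otimes U_q$, in the antipode being an \emph{anti}-homomorphism, and implicitly in the module-action conventions of \eqref{eq:51}. The algebraic identities themselves are elementary once the signs are fixed, so the difficulty is entirely one of sign discipline rather than of finding a clever argument; the well-definedness of $\Delta$ on the relation $EF + FE = (K-K^{-1})/(q-q^{-1})$ is the single computation where a sign error would be easiest to make and hardest to catch.
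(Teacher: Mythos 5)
Your proposal is correct and is exactly the ``straightforward calculation'' that the paper's proof consists of: a direct check on generators of well-definedness, coassociativity, the counit axiom, and the antipode axiom, with the Koszul signs and the centrality of $K$ doing the work in the cancellations. No further comparison is needed.
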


\begin{proof}
  This is a straightforward calculation.
\end{proof}

Notice that from the centrality of $K$ it follows that $S^2=\id$; this
is a special property of $\Uqgl$, that for instance does not hold in $U_q(\mathfrak{gl}(m|n))$ for general $m,n$ (see
\cite{MR1694051} for a definition of the general linear quantum supergroup).

We define a \emph{bar involution} on $\Uqgl$ by setting:
\begin{equation}
  \label{reps:eq:3}
  \overline E=E, \qquad \overline F=F, \qquad \overline { \quantumq^h}  = \quantumq^{-h},
  \qquad \overline q=q^{-1}.
\end{equation}
Note that $\overline{\Delta}=(\overline{\phantom{x}} \otimes \overline{\phantom{x}}) \circ \Delta
\circ \overline{\phantom{x}}$ defines another comultiplication on $\Uqgl$, and by
definition $\overline \Delta(\overline x) = \overline{ \Delta(x)}$ for all $x \in \Uqgl$.

\subsection{The Hopf superalgebra \boldmath$\Uhgl$}
\label{sec:completion}

Our goal is to construct a ribbon category of representations of $\Uqgl$, so that we can define link invariants. The main ingredient is the $R$-matrix.
Unfortunately, as usual, it is not possible to
construct a universal $R$-matrix for $\Uqgl$; instead, we need to
consider the $\hbar$-version of the quantum enveloping superalgebra, which
we will denote by $\Uhgl$ and which is a $\C[[\hbar]]$-superalgebra
completed with respect to the $\hbar$-adical topology. We will prove that $\Uhgl$ is a ribbon algebra. Then, using a standard argument of Tanisaki \cite{MR1187582}, we obtain a ribbon structure on the category of finite-dimensional $\Uqgl$-representations.
For details
about topological $\C[[\hbar]]$-algebras we refer to \cite[Chapter
XVI]{MR1321145}. We will denote by the symbol $\hotimes$ the \emph{completed} tensor product of topological $\C[[\hbar]]$-algebras.

We define $\Uhgl$ to be the unital $\C[[\hbar]]$-algebra topologically generated by the elements $E, F, H_1, H_2$ in degrees $\abs{H_1}=\abs{H_2}=0$, $\abs{E}=\abs{F}=1$ subject to the relations
\begin{equation}\label{eq:4}
\begin{gathered}
   H_1H_2=H_2H_1,\\
 H_i E - E H_i = \langle H_i, \alpha \rangle E, \qquad    H_i F - F H_i = - \langle H_i, \alpha \rangle F,\\
  EF+FE= \frac{e^{\hbar (H_1+H_2)}-e^{-\hbar (H_1+H_2)}}{e^{\hbar} - e^{-\hbar}}, \qquad  E^2=F^2=0.
\end{gathered}
\end{equation}
Note that although $e^\hbar- e^{-\hbar}$ is not invertible, it is the product of $\hbar$ and an invertible element of $\C[[\hbar]]$, hence the fourth relation makes sense.

Although the relation between $\Uqgl$ and $\Uhgl$ is technically not easy to formalize (see \cite{MR1300632} for details), one should keep in mind the following picture:
\begin{equation}
  \label{eq:36}
  \begin{aligned}
    q & \longleftrightarrow e^{\hbar},\\
    \quantumq^{h_i} & \longleftrightarrow e^{\hbar H_i}.
  \end{aligned}
\end{equation}
This also explains why we use the symbols $\quantumq^h$ as generators for $\Uqgl$. In the following, we set $q=e^\hbar$ as an element of $\C[[\hbar]]$ and $K=e^{\hbar (H_1+H_2)}$ as an element of $\Uhgl$. 

As before, we define a \emph{comultiplication} $\Delta: U_\hbar \mapto U_\hbar \hotimes U_\hbar$, a \emph{counit} $\counit: U_\hbar \mapto \C[[\hbar]]$ and an \emph{antipode} $S: U_\hbar \mapto U_\hbar$ by setting for the generators
\begin{equation}
\begin{aligned}
  \Delta(E)&= E \otimes K^{-1} +1 \otimes E, & \Delta(F)&=F \otimes 1 + K \otimes F,\\
  S(E)&=-EK, & S(F)&=- K^{-1}F,\\
  \Delta(H_i)&=H_i \otimes 1+1\otimes H_i, &   S(H_i)&=-H_i,\\
  \counit(E)& =\counit(F)=0, & \counit(H_i)&=0,
\end{aligned}\label{eq:13}
\end{equation}
and extending $\Delta$ and $\counit$ to algebra homomorphisms and $S$ to an algebra anti-homomorphism. We have then:
\begin{prop}
  \label{prop:4}
  The maps $\Delta$, $\counit$ and $S$ turn $U_\hbar$ into a Hopf superalgebra.
\end{prop}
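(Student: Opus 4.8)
The plan is to follow the same strategy as in Proposition~\ref{prop:1}, the only genuine novelty being that we now work over the completed tensor product $U_\hbar \hotimes U_\hbar$ and must keep track of the exponential expressions in $H_1, H_2$. Since $\Delta$ and $\counit$ are declared on the topological generators $E, F, H_1, H_2$ and extended multiplicatively and continuously, the first task is to check that they are well defined, i.e.\ that they respect the defining relations~\eqref{eq:4}.

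For this I would first record the auxiliary identity $\Delta(K) = K \otimes K$: since $H_1 + H_2$ is primitive and the legs $(H_1+H_2)\otimes 1$ and $1 \otimes(H_1+H_2)$ commute, applying the exponential series gives $\Delta(e^{\hbar(H_1+H_2)}) = e^{\hbar(H_1+H_2)} \otimes e^{\hbar(H_1+H_2)}$. The relations $H_1H_2 = H_2H_1$ and $H_iE - EH_i = \langle H_i,\alpha\rangle E$ then follow at once from primitivity of the $H_i$, while the relations $E^2 = F^2 = 0$ are where the super signs enter: using $(a\otimes b)(c \otimes d) = (-1)^{\abs{b}\abs{c}} ac \otimes bd$ and the centrality of $K$, the two mixed terms in $\Delta(E)^2 = (E \otimes K^{-1} + 1 \otimes E)^2$ cancel, because transporting the odd $E$ in the second leg past the odd $E$ in the first leg produces a sign, and the remaining terms vanish by $E^2 = 0$. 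The mixed relation $EF + FE = (K - K^{-1})/(q - q^{-1})$ is the most laborious: one expands $\Delta(E)\Delta(F) + \Delta(F)\Delta(E)$, again collecting Koszul signs and using centrality of $K$, and compares with $\Delta$ of the right-hand side, namely $(K\otimes K - K^{-1}\otimes K^{-1})/(q-q^{-1})$. For $\counit$ there is nothing to sign-track: it kills $E, F$ and the $H_i$, and one checks that the right-hand side of each relation is also annihilated (for the mixed relation, $\counit(K) = 1$ forces the numerator to $0$).

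Next I would verify the coalgebra axioms. Because coassociativity $(\Delta \otimes \id)\Delta = (\id \otimes \Delta)\Delta$ and the counit axiom $(\counit \otimes \id)\Delta = \id = (\id \otimes \counit)\Delta$ are equalities of continuous algebra homomorphisms, it suffices to test them on the generators, where both are immediate from~\eqref{eq:13}. Finally, for the antipode, $S$ is extended as a super algebra anti-homomorphism, $S(xy) = (-1)^{\abs{x}\abs{y}} S(y)S(x)$, so I would first check it respects~\eqref{eq:4}. Here $S(K) = K^{-1}$ (from $S(H_i) = -H_i$ and commutativity of the $H_i$), while $S(E^2)=0$ since $S(E)^2 = (EK)^2 = 0$, and a short computation gives $S(EF + FE) = -(EF+FE)$, matching $S$ applied to the respective right-hand sides. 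The antipode identity $\mult(S \otimes \id)\Delta = \unit\,\counit = \mult(\id \otimes S)\Delta$ is then also checked on generators; for instance $\mult(S\otimes\id)\Delta(E) = S(E)K^{-1} + E = -EKK^{-1} + E = 0 = \unit\counit(E)$.

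The main obstacle is not conceptual but bookkeeping: one must be scrupulous about the signs produced when the odd generators $E, F$ are multiplied in $U_\hbar \hotimes U_\hbar$, since it is exactly these signs that make $\Delta(E)^2$ and $\Delta(F)^2$ vanish and that render $S$ compatible with the mixed relation. The topological subtleties (continuity of the maps, convergence of the exponential series, and the invertibility of $e^\hbar - e^{-\hbar}$ up to a unit) are handled exactly as in \cite[Chapter XVI]{MR1321145} and cause no additional difficulty once $\Delta(K) = K \otimes K$ is in hand.
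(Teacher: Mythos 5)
Your proposal is correct and takes essentially the same approach as the paper: the paper's proof of Proposition~\ref{prop:4} simply states that it requires the same calculations as Proposition~\ref{prop:1} (a direct verification), which is exactly what you outline --- well-definedness of $\Delta$, $\counit$ and $S$ on the relations~\eqref{eq:4}, the coalgebra axioms, and the antipode axiom checked on generators, with the Koszul signs and the identity $\Delta(K)=K\otimes K$ handled as you describe.
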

The proof requires precisely the same calculations as the proof of Proposition~\ref{prop:1}.

As for $\Uqgl$, we define a \emph{bar involution} on $\Uhgl$ by setting:
\begin{equation}
  \label{eq:48}
  \overline E=E, \qquad \overline F=F, \qquad \overline {H_i} = H_i,
  \qquad \overline \hbar=-\hbar.
\end{equation}
As before, $\overline{\Delta}=(\overline{\phantom{x}} \otimes \overline{\phantom{x}}) \circ \Delta
\circ \overline{\phantom{x}}$ defines another comultiplication on $\Uhgl$, and by
definition $\overline \Delta(\overline x) = \overline{ \Delta(x)}$ for all $x \in \Uhgl$.

\subsection{The braided structure}
\label{reps:sec:quas-struct}
We are going to recall the \emph{braided Hopf superalgebra
  structure} (cf.\ \cite{MR1892766}, \cite{MR1881401}) of $\Uhgl$. The main ingredient is the universal $R$-matrix, which has
been explicitly computed by Khoroshkin and Tolstoy
(cf.\ \cite{MR1134942}). We adapt their definition to our notation.\footnote{Our comultiplication is the opposite of \cite{MR1134942},
  hence we have to take the opposite $R$-matrix, cf.\ also
  \cite[Chapter 8]{MR1321145}.}

We define $R=\Theta \Upsilon \in \Uhgl \hotimes \Uhgl$ where
\begin{align}
  \Upsilon &
= e^{\hbar (H_1 \otimes H_1 - H_2 \otimes H_2)},
  \label{reps:eq:33} \\
  \Theta & = 1 + (q - q^{-1} ) F \otimes E.\label{reps:eq:34}
\end{align}
Notice that the expression for $\Upsilon$ makes sense as an element of the completed tensor product $\Uhgl \hotimes \Uhgl$.
Recall that a vector $w$ in some representation $W$ of $\Uhgl$ is said to be a \emph{weight vector} of \emph{weight} $\mu$ if $H_i w = \langle H_i, \mu \rangle w$ for $i=1,2$. 
The element $\Upsilon$ is then characterized by the property that it
acts on a weight vector $w_1 \otimes w_2$ by  $q^{(\mu_1,\mu_2)} = e^{\hbar (\mu_1,\mu_2)}$,
if $w_1$ and $w_2$ have weights $\mu_1$ and $\mu_2$ respectively.

The element $\Theta$ is called the \emph{quasi $R$-matrix}; it is easy to check that it satisfies
\begin{equation}
  \Theta \overline \Theta = \overline \Theta \Theta = 1 \otimes 1.\label{reps:eq:8}
\end{equation}
It follows in particular that $R$ is invertible with inverse $R^{-1} = \Upsilon^{-1} \Theta^{-1}= \Upsilon^{-1} \overline \Theta$.

Recall that a bialgebra $B$ is called \emph{quasi-cocommutative} (\cite[Definition VIII.2.1]{MR1321145}) if there exists an invertible element $R \in B \otimes B$
such that for all $x \in B$ we have $\Delta^\op (x) = R \Delta(x) R^{-1}$, where  $\Delta^\op$ is the opposite comultiplication $\Delta^\op=\sigma \circ
\Delta$ with $\sigma (a \otimes b) = (-1)^{\abs{a} \abs{b}} (b \otimes
a)$.

\begin{lemma}
  \label{lem:3}
For all $x \in \Uhgl$ we have
\begin{equation}
  \label{reps:eq:5}
  R \Delta (x) = \Delta^\op(x) R.
\end{equation}
Hence the Hopf algebra $\Uhgl$ is quasi-cocommutative.
\end{lemma}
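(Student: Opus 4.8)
The plan is to verify the identity $R\Delta(x) = \Delta^{\op}(x)R$ only on the topological generators $E$, $F$, $H_1$, $H_2$ and then bootstrap to all of $\Uhgl$. Indeed, since $R$ is invertible (as noted after \eqref{reps:eq:8}), the map $x \mapsto R\Delta(x)R^{-1}$ is an algebra homomorphism; as $\Delta^{\op}$ is also an algebra homomorphism, the set of $x$ satisfying $\Delta^{\op}(x) = R\Delta(x)R^{-1}$ is a subalgebra, and being the kernel of a continuous map it is closed, so it suffices to check the generators. For the Cartan generators this is immediate: $\Delta(H_i)$ is symmetric and even, hence $\Delta^{\op}(H_i) = \Delta(H_i)$, while both $\Upsilon$ and $\Theta$ commute with $\Delta(H_i) = H_i\otimes 1 + 1\otimes H_i$ (the factor $F\otimes E$ in $\Theta$ has total weight $-\alpha+\alpha = 0$). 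Thus $R$ commutes with $\Delta(H_i)$, as required.

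The heart of the matter is the case of $E$ (that of $F$ being entirely analogous). First I would record how $\Upsilon$ twists the generators. Using the characterization that $\Upsilon$ acts on a weight vector $w_1\otimes w_2$ by $q^{(\mu_1,\mu_2)}$, together with the facts that $E$ and $F$ shift weights by $\pm\alpha$ and that $K$ acts on a vector of weight $\mu$ by $q^{(\alpha,\mu)}$, one computes the conjugation identities
\begin{equation*}
  \Upsilon(E\otimes 1)\Upsilon^{-1} = E\otimes K, \qquad \Upsilon(1\otimes E)\Upsilon^{-1} = K\otimes E,
\end{equation*}
and their analogues for $F$ with $K$ replaced by $K^{-1}$. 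Consequently
\begin{equation*}
  \Upsilon\,\Delta(E)\,\Upsilon^{-1} = E\otimes 1 + K\otimes E,
\end{equation*}
whereas a (sign-careful) evaluation of $\Delta^{\op}(E) = \sigma(\Delta(E))$ gives $\Delta^{\op}(E) = E\otimes 1 + K^{-1}\otimes E$. The two differ precisely by $(K - K^{-1})\otimes E$, and the role of the quasi $R$-matrix $\Theta$ is to absorb this discrepancy.

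Concretely, after cancelling the invertible $\Upsilon$ on the right, the desired identity $\Theta\Upsilon\Delta(E) = \Delta^{\op}(E)\Theta\Upsilon$ reduces to
\begin{equation*}
  \Theta\bigl(E\otimes 1 + K\otimes E\bigr) = \bigl(E\otimes 1 + K^{-1}\otimes E\bigr)\Theta .
\end{equation*}
Expanding $\Theta = 1 + (q-q^{-1})F\otimes E$ and multiplying out in the superalgebra $\Uhgl\hotimes\Uhgl$ (so that each swap of $E$ past $F$ or past $E$ contributes a sign, and the terms involving $E^2 = 0$ vanish), the $1$-part cancels and the surviving cross terms collapse to the single equation $(K-K^{-1})\otimes E = (q-q^{-1})(EF+FE)\otimes E$. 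This is exactly the defining relation $EF+FE = (K-K^{-1})/(q-q^{-1})$ tensored with $E$, so the identity holds. I expect the main obstacle to be the bookkeeping of signs: getting every Koszul sign right in $\Delta^{\op}$ and in the products $(F\otimes E)(E\otimes 1)$, $(E\otimes 1)(F\otimes E)$ and the like, since it is precisely these signs that make the cross terms combine into $EF+FE$ rather than into a commutator. Once $E$ and $F$ are settled, quasi-cocommutativity of $\Uhgl$ follows from the subalgebra argument of the first paragraph.
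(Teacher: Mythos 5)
Your proposal is correct and follows essentially the same route as the paper: the paper factors $R=\Theta\Upsilon$ and delegates to Lemma~\ref{lem:1} in the appendix the two generator computations $\Upsilon\Delta(x)\Upsilon^{-1}=\overline\Delta^{\op}(x)$ and $\Theta\,\overline\Delta^{\op}(x)=\Delta^{\op}(x)\Theta$, and your intermediate expression $E\otimes 1+K\otimes E$ is exactly the paper's $\overline\Delta^{\op}(E)$, with the final cancellation likewise coming from $EF+FE=(K-K^{-1})/(q-q^{-1})$. The only cosmetic difference is that you phrase the $\Upsilon$-conjugation via its action on weight vectors rather than by commuting the exponentials directly, and you do not introduce the bar involution.
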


\begin{proof}
  Using Lemma \ref{lem:1} we compute
  \begin{equation*}
    R \Delta (x) = \Theta \Upsilon \Delta(x) = \Theta \overline \Delta^\op(x) \Upsilon = \Delta^\op(x) \Theta \Upsilon= \Delta^\op(x) R. \qedhere
  \end{equation*}
\end{proof}

A quasi-cocommutative Hopf algebra is called \emph{braided} or \emph{quasi-triangular} if the following quasi-triangularity identities hold:
\begin{equation}
  \label{reps:eq:6}
    (\Delta \otimes \id) (R) = R_{13} R_{23} \qquad \text{and}
    \qquad (\id \otimes  \Delta)( R) = R_{13}R_{12}.\qedhere
\end{equation}
In this case, the element $R$ is called \emph{universal R-matrix}.

\begin{prop}
  \label{prop:2}
  The Hopf superalgebra $\Uhgl$ is braided.
\end{prop}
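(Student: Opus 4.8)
The plan is to use the factorisation $R=\Theta\Upsilon$ and to check the two quasi-triangularity identities of \eqref{reps:eq:6} one at a time, since quasi-cocommutativity has already been established in Lemma~\ref{lem:3}. Because $\Delta$ is a homomorphism of superalgebras, so are $\Delta\otimes\id$ and $\id\otimes\Delta$, and the two left-hand sides can therefore be computed factor by factor. The computation then splits into a ``Cartan part'' coming from $\Upsilon$ and an ``off-diagonal part'' coming from $\Theta$.

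First I would dispose of $\Upsilon$. Writing its logarithmic exponent as $\Xi=H_1\otimes H_1-H_2\otimes H_2$, the facts that the $H_i$ are primitive for the comultiplication \eqref{eq:13} and that they mutually commute give $(\Delta\otimes\id)(\Xi)=\Xi_{13}+\Xi_{23}$ with $\Xi_{13}$ and $\Xi_{23}$ commuting, so that
\[
  (\Delta\otimes\id)(\Upsilon)=\Upsilon_{13}\Upsilon_{23},\qquad
  (\id\otimes\Delta)(\Upsilon)=\Upsilon_{13}\Upsilon_{12}.
\]
The crucial auxiliary input is how $\Upsilon$ commutes past a single generator. Computing $\operatorname{ad}_\Xi$ on $E\otimes1$, $1\otimes E$, $F\otimes1$, $1\otimes F$ by means of the relations $H_iE-EH_i=\langle H_i,\alpha\rangle E$ and $H_iF-FH_i=-\langle H_i,\alpha\rangle F$ from \eqref{eq:4} (equivalently, reading off the weight characterisation of $\Upsilon$), I obtain
\[
\begin{gathered}
  \Upsilon(E\otimes1)=(E\otimes K)\Upsilon,\qquad
  \Upsilon(1\otimes E)=(K\otimes E)\Upsilon,\\
  \Upsilon(F\otimes1)=(F\otimes K^{-1})\Upsilon,\qquad
  \Upsilon(1\otimes F)=(K^{-1}\otimes F)\Upsilon.
\end{gathered}
\]
These are exactly the relations that will account for the factors $K^{\pm1}$ appearing in $\Delta$.

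Next I would treat $\Theta=1+(q-q^{-1})F\otimes E$ and assemble everything. For the first identity, $(\Delta\otimes\id)(\Theta)=1+(q-q^{-1})\bigl(F\otimes1\otimes E+K\otimes F\otimes E\bigr)$, using $\Delta(F)=F\otimes1+K\otimes F$. On the other side I would expand $R_{13}R_{23}=\Theta_{13}\Upsilon_{13}\Theta_{23}\Upsilon_{23}$ and slide $\Upsilon_{13}$ through $\Theta_{23}$ by the relations above; this turns $1\otimes F\otimes E$ into $K\otimes F\otimes E$, so that $\Theta_{13}\bigl(1+(q-q^{-1})K\otimes F\otimes E\bigr)$ reproduces $(\Delta\otimes\id)(\Theta)$ up to a quadratic cross term that is proportional to $E^2=0$ and hence drops out. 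Comparing both sides yields the identity. The second identity $(\id\otimes\Delta)(R)=R_{13}R_{12}$ is entirely parallel, now with $\Delta(E)=E\otimes K^{-1}+1\otimes E$ producing the conjugation $F\otimes E\otimes1\mapsto F\otimes E\otimes K^{-1}$, and with the surviving cross term proportional to $F^2=0$.

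I expect the only real obstacle to be bookkeeping: tracking the Koszul signs in the super tensor product (a sign appears each time the odd elements $E$ and $F$ are transposed past one another, exactly as in \eqref{eq:51}) and verifying the $\Upsilon$--$\Theta$ commutation relations with the correct placement of $K$ versus $K^{-1}$. Once those relations are in hand, the vanishing relations $E^2=F^2=0$ annihilate all higher-order terms, and each identity collapses to a short direct comparison.
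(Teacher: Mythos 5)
Your proposal is correct and follows essentially the same route as the paper: the identity $(\Delta\otimes\id)(\Upsilon)=\Upsilon_{13}\Upsilon_{23}$ from primitivity of the $H_i$, the conjugation of $\Theta_{23}$ by $\Upsilon_{13}$ (which the paper isolates as Lemma~\ref{lem:5} in the appendix, proved via exactly your commutation relations $\Upsilon(1\otimes E)=(K\otimes E)\Upsilon$, etc.), and the vanishing of the quadratic cross terms by $E^2=F^2=0$. The only difference is organizational — you carry out the $\Upsilon$--$\Theta$ exchange inline rather than quoting it as a separate lemma.
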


\begin{proof}
  Since
  \begin{equation*}
    \label{eq:43}
    (\Delta \otimes \id)(\Upsilon) = e^{\hbar (H_1 \otimes 1 \otimes H_1 + 1 \otimes H_1 \otimes H_1 - H_2 \otimes 1 \otimes H_2 - 1 \otimes H_2 \otimes H_2)} = \Upsilon_{13}\Upsilon_{23}
  \end{equation*}
  we can compute using Lemma \ref{lem:5}
  \begin{equation*}
    \label{eq:42}
    (\Delta \otimes \id)(R) = (\Delta \otimes \id)(\Theta)\cdot(\Delta \otimes \id)( \Upsilon) = \Theta_{13} \Upsilon_{13} \Theta_{23} \Upsilon_{13}^{-1}  \Upsilon_{13} \Upsilon_{23} = R_{13} R_{23}.
  \end{equation*}
  Similarly we get $(\id \otimes \Delta)(R)= R_{13}R_{12}$.
\end{proof}

As an easy consequence of the braided structure, the following Yang-Baxter equation holds (see \cite[Theorem VIII.2.4]{MR1321145} or \cite[Proposition 4.2.7]{MR1300632}):
\begin{equation}
  \label{reps:eq:7}
  R_{12}R_{13}R_{23}=R_{23}R_{13}R_{12}.
\end{equation}

\subsection{The ribbon structure}
\label{reps:sec:ribbon-structure}

Write $R=\sum_r a_r \otimes b_r$ and define
\begin{equation}
  \label{reps:eq:64}
  u = \sum_r (-1)^{\abs{a_r}\abs{b_r}}S(b_r) a_r \in \Uhgl.
\end{equation}
Then (cf.\ \cite[Proposition 4.2.3]{MR1300632}) $u$ is invertible and we have
\begin{equation}
S^2(x)=uxu^{-1}  \qquad \text{for all }x \in \Uhgl.\label{reps:eq:65}
\end{equation}
In our case, in particular, since $S^2 = \id$, the element $u$ is central. By an easy explicit computation, we have
\begin{equation}
  \label{reps:eq:66}
  u = (1 + (q-q^{-1})EKF) e^{\hbar (H_2^2-H_1^2)}
\end{equation}
and
\begin{equation}
  \label{reps:eq:68}
  S(u)=e^{\hbar (H_2^2-H_1^2)}(1-(q-q^{-1})FK^{-1}E).
\end{equation}

We recall that a braided Hopf superalgebra $A$ is called \emph{ribbon}
(cf.\ \cite[Chapter 4]{MR1881401} or
\cite[\textsection{}4.2.C]{MR1300632}) if there is an even central
element $v \in A$ such that
\begin{equation}
  \begin{gathered}
    v^2=uS(u),\qquad  \counit(v)=1, \qquad S(v)=v,\\
    \Delta(v)=(R_{21}R_{12})^{-1}(v\otimes v).
  \end{gathered} \label{eq:44}
\end{equation}
In
$\Uhgl$ let
\begin{equation}
  \label{reps:eq:67}
  v=K^{-1}u=uK^{-1} = (K^{-1} + (q-q^{-1})EF) e^{\hbar (H_2^2-H_1^2)}.
\end{equation}
Then we have:
\begin{prop}
  \label{reps:prop:2}
  With $v$ as above, $\Uhgl$ is a ribbon Hopf superalgebra.
\end{prop}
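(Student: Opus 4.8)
The plan is to verify, for the even central element $v = K^{-1}u = uK^{-1}$ of \eqref{reps:eq:67}, each of the defining identities of a ribbon element collected in \eqref{eq:44}. That $v$ is even and central is immediate: $u$ is central because $S^2 = \id$ (so that \eqref{reps:eq:65} reads $x = uxu^{-1}$), it is even by inspection of \eqref{reps:eq:66}, and $K$ is even and central. The counit condition $\counit(v) = 1$ then follows at once, since $\counit$ is an algebra map with $\counit(u) = 1$ (a standard property of the Drinfeld element \eqref{reps:eq:64}) and $\counit(K) = 1$; one may equally read it off directly from the explicit form \eqref{reps:eq:67}.

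For the comultiplication identity $\Delta(v) = (R_{21}R_{12})^{-1}(v\otimes v)$ I would invoke Drinfeld's formula $\Delta(u) = (R_{21}R_{12})^{-1}(u\otimes u)$, valid in the braided super setting (cf.\ \cite[\textsection{}4.2]{MR1300632}), together with the fact that $K = e^{\hbar(H_1+H_2)}$ is grouplike, whence $\Delta(K^{-1}) = K^{-1}\otimes K^{-1}$. Since $K$ is central, multiplying these gives $\Delta(v) = \Delta(u)(K^{-1}\otimes K^{-1}) = (R_{21}R_{12})^{-1}(u\otimes u)(K^{-1}\otimes K^{-1}) = (R_{21}R_{12})^{-1}(v\otimes v)$, as required.

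The two remaining identities $v^2 = uS(u)$ and $S(v) = v$ collapse to a single computation. Because $u$ and $K$ are central, $v^2 = u^2 K^{-2}$ and $S(v) = S(K^{-1})S(u) = K S(u)$; hence, $u$ being invertible, both $v^2 = uS(u)$ and $S(v) = v$ are equivalent to the one identity $S(u) = uK^{-2}$. To prove it I would first extract from the relations \eqref{eq:4} the commutation rules $e^{\hbar(H_2^2 - H_1^2)}E = EK^{-2}e^{\hbar(H_2^2 - H_1^2)}$ and $e^{\hbar(H_2^2 - H_1^2)}F = FK^{2}e^{\hbar(H_2^2-H_1^2)}$, which follow from $(H_2^2-H_1^2)E = E\bigl(H_2^2 - H_1^2 - 2(H_1+H_2)\bigr)$ and the analogous relation for $F$. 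Inserting these, together with centrality of $K$ and the defining relation $EF + FE = (K - K^{-1})/(q - q^{-1})$, into the explicit expressions \eqref{reps:eq:66} and \eqref{reps:eq:68} reduces $S(u) = uK^{-2}$ to the trivial scalar identity $K^{-2} + (K - K^{-1})K^{-1} = 1$.

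The main obstacle is the comultiplication identity: it is the only step genuinely exploiting the braided (quasi-triangular) structure of Proposition~\ref{prop:2}, and in the super setting it demands careful bookkeeping of the Koszul signs occurring both in $R_{21}R_{12}$ and in the definition \eqref{reps:eq:64} of $u$. Everything else is either formal (counit and centrality) or the short explicit manipulation that pushes the factors $e^{\hbar(H_2^2-H_1^2)}$ past $E$ and $F$ and reduces the whole question to the single identity $S(u) = uK^{-2}$.
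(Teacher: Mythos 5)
Your proposal is correct and follows essentially the same route as the paper: centrality and the counit/comultiplication identities are delegated to the general properties of the Drinfeld element $u$ in a quasi-triangular Hopf superalgebra, and both $v^2 = uS(u)$ and $S(v)=v$ are reduced to the single explicit identity $S(u)=uK^{-2}$, verified by the relation $EF+FE=(K-K^{-1})/(q-q^{-1})$. The only cosmetic difference is that the paper commutes the whole weight-zero factor $1+(q-q^{-1})EFK$ past $e^{\hbar(H_2^2-H_1^2)}$ at once, whereas you push the exponential past $E$ and $F$ separately; both are valid.
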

\begin{proof}
  Since both $u$ and $K^{-1}$ are central, so is $v$.  Let us check that  $S(u)=uK^{-2}$.
  Indeed we have
  \begin{equation}
    \label{eq:19}
    \begin{aligned}
      u &= (1 + (q-q^{-1})EFK) e^{\hbar(H_2^2-H_1^2)} \\
      &=  e^{\hbar(H_2^2-H_1^2)} (1 + (q-q^{-1})EFK) \\
      &= e^{\hbar(H_2^2-H_1^2)}  (1 + (K-K^{-1})K - (q-q^{-1}) FEK)\\
      &= e^{\hbar(H_2^2-H_1^2)}  (K^2 - (q-q^{-1}) FEK) = S(u)K^2.   
    \end{aligned}
  \end{equation}

  It follows then immediately that $v^2 = u^2K^{-2} = u S(u)$ and
  $S(v)=S(u)K = u K^{-1} = v$.

  The relations $\Delta(v)=(R_{21}R_{12})^{-1}(v\otimes v)$ and
  $\counit(v)=1$ follow from
  analogous relations for $u$, that hold for every quasi-triangular
  Hopf superalgebra (see \cite[Proposition 4.3]{MR1881401}).
\end{proof}

\section{Representations}
\label{sec:representations}
We define a parity function $\abs{\cdot}:\sfP \mapto \Z/2\Z$ on the weight lattice by setting $\abs{\epsilon_1}=0$, $\abs{\epsilon_2}=1$ and extending additively. By a \emph{representation} of $\Uqgl$ we mean from now on a finite-dimensional
$\Uqgl$-supermodule with a decomposition into weight spaces $M =
\bigoplus_{\lambda \in \sfP} M_\lambda$ with integral weights $\lambda \in \sfP$, such that $\quantumq^h$ acts as
$q^{\langle h,\lambda \rangle}$ on $M_\lambda$. We suppose further that $M$ is $\Z/2\Z$-graded, and the grading is uniquely determined by the requirement that $M_\lambda$ is in degree $\abs{\lambda}$.

\subsection{Irreducible representations}
\label{sec:irred-repr}

It is not difficult to find all simple representations: up to
isomorphism they are indexed by their highest weight $\lambda \in
\sfP$.
If $\lambda \in \Ann (h_1+h_2)$, then 
the simple representation with highest weight $\lambda$ is one-dimensional, generated by a vector
$v^\lambda$ in degree $\abs{v^\lambda}=\abs{\lambda}$
with
\begin{align}\label{reps:eq:12}
  Ev^\lambda&=0, &  Fv^\lambda&=0, & \quantumq^h v^\lambda &=q^{\langle h, \lambda \rangle} v^\lambda, &  Kv^\lambda & = v^\lambda.
\end{align}
We will denote this representation by $\C(q)_\lambda$, to emphasize
that it is just a copy of $\C(q)$ on which the action is twisted
by the weight $\lambda$. In particular for $\lambda = 0$ we
have the trivial representation $\C(q)_0$, that we will simply denote
by $\C(q)$ in the following.

If $\lambda \notin \Ann (h_1 + h_2)$ then the simple representation
$L(\lambda)$ with highest weight $\lambda$ is two-dimensional; we
denote by $v^\lambda_0$ its highest weight vector. Let us also
introduce the following notation that will be useful later:
\begin{equation}
  \label{reps:eq:43}
  q^\lambda = q^{\langle h_1+h_2,\lambda\rangle}, \qquad [\lambda] =
  [\langle h_1+h_2, \lambda \rangle],
\end{equation}
where, as usual, $[k]$ is the quantum number defined by
\begin{equation}\label{eq:173} [k]=\frac{q^k-q^{-k}}{q-q^{-1}} = q^{-k+1} + q^{-k +
    3} + \cdots + q^{k-3} + q^{k-1}.
\end{equation}
Even if the second equality holds only for $k>0$, we define $[k]$ for all integers $k$ using the first equality; in particular
we have $[-k]=-[k]$.

 Then
$L(\lambda) = \C(q)\langle v^\lambda_0 \rangle \oplus \C(q) \langle
v^\lambda_1 \rangle$ with $\abs{v^\lambda_0}=\abs{\lambda}$, $\abs{v^\lambda_1}=\abs{\lambda}+1$ and
\begin{equation}
\begin{aligned}
  Ev^\lambda_0&=0, &  Fv^\lambda_0&=[\lambda] v^\lambda_1, & \quantumq^h v^\lambda_0
  &=q^{\langle h,\lambda\rangle} v^\lambda_0, &  Kv^\lambda_0 & = q^{ \lambda } v^\lambda_0,\\
  Ev^\lambda_1&= v^\lambda_0, & Fv^\lambda_1&=0,      & \quantumq^h v^\lambda_1 &=q^{\langle
    h, \lambda - \alpha \rangle} v^\lambda_1, &  Kv^\lambda_1 & = q^{\lambda} v^\lambda_1.
\end{aligned}\label{reps:eq:35}
\end{equation}

\begin{remark}
  \label{rem:1}
  As a remarkable property of $\Uqgl$, we notice that since
  $E^2=F^2=0$ \emph{all} simple $\Uqgl$-modules (even the ones with
  non-integral weights) are finite-dimensional. In fact, formulas
  \eqref{reps:eq:35} define two-dimensional simple $\Uqgl$-modules for
  all complex weights $\lambda \in \C \epsilon_1 \oplus \C \epsilon_2$
  such that $\langle h_1+h_2, \lambda \rangle \neq 0$.
\end{remark}

In the following, we set
\begin{equation}
\sfP'=\{\lambda \in \sfP \suchthat \lambda \notin \Ann (h_1+h_2)\}\label{eq:179}
\end{equation}
and we will mostly consider two-dimensional simple representations $L(\lambda)$ for $\lambda \in \sfP'$.
Also, $\sfP^\pm = \{\lambda \in \sfP \suchthat \langle h_1+h_2,\lambda \rangle \gtrless0\}$ will be the set of positive/negative weights and $\sfP' = \sfP^+ \sqcup \sfP^-$.

\begin{remark}
  Note that in analogy with the classical Lie situation, we can set
  $\alpha^{\down} = h_1+h_2$. Then $e,f,\alpha^\down$ generate the Lie superalgebra $\mathfrak{sl}(1|1)$ inside $\gl(1|1)$. We work with $\gl(1|1)$ and not with $\mathfrak{sl}(1|1)$ since the latter is not reductive, but nilpotent.\label{rem:3}
\end{remark}

\subsection{Decomposition of tensor products}
\label{sec:decomp-tens-prod}

The following lemma is the first step to decompose
a tensor product of $\Uqgl$-representations:

\begin{lemma}\label{reps:lem:1}
  Let $\lambda, \mu \in \sfP'$ and suppose also $\lambda+\mu \in \sfP'$. Then we have
  \begin{equation}
    L(\lambda) \otimes L(\mu) \cong L(\lambda + \mu) \oplus
    L(\lambda+\mu-\alpha).
    \label{reps:eq:36}
  \end{equation}
\end{lemma}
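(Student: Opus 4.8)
The plan is to decompose the four-dimensional module $L(\lambda)\otimes L(\mu)$ by exhibiting two highest weight vectors and showing that the submodules they generate are irreducible and meet trivially. First I would record the weight spaces: since $\Delta(\quantumq^h)=\quantumq^h\otimes\quantumq^h$ and $v^\nu_1$ has weight $\nu-\alpha$, the vectors $v^\lambda_0\otimes v^\mu_0$, $v^\lambda_1\otimes v^\mu_0$, $v^\lambda_0\otimes v^\mu_1$, $v^\lambda_1\otimes v^\mu_1$ have weights $\lambda+\mu$, $\lambda+\mu-\alpha$, $\lambda+\mu-\alpha$, $\lambda+\mu-2\alpha$ respectively. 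Thus $\lambda+\mu$ occurs with multiplicity one, $\lambda+\mu-\alpha$ with multiplicity two, and $\lambda+\mu-2\alpha$ with multiplicity one, which already matches the multiset of weights of $L(\lambda+\mu)\oplus L(\lambda+\mu-\alpha)$.

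The first highest weight vector is $w_0=v^\lambda_0\otimes v^\mu_0$. Using $\Delta(E)=E\otimes K^{-1}+1\otimes E$ together with the sign rule \eqref{eq:51}, a direct check gives $Ew_0=0$, so $w_0$ is primitive of weight $\lambda+\mu$. Since $\lambda+\mu\in\sfP'$ we have $[\lambda+\mu]\neq 0$, and because $\Delta(F)\Delta(E)w_0=0$, the relation $EF+FE=(K-K^{-1})/(q-q^{-1})$ yields $EFw_0=[\lambda+\mu]\,w_0\neq 0$; combined with $\Delta(F)^2=0$ (which follows from $F^2=0$ and the centrality of $K$), this shows $\{w_0,Fw_0\}$ spans an irreducible submodule isomorphic to $L(\lambda+\mu)$. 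For the second summand I would seek a primitive vector in weight $\lambda+\mu-\alpha$, i.e.\ a combination $w_1=a\,v^\lambda_1\otimes v^\mu_0+b\,v^\lambda_0\otimes v^\mu_1$ with $Ew_1=0$. Computing $E(v^\lambda_1\otimes v^\mu_0)=q^{-\mu}w_0$ and $E(v^\lambda_0\otimes v^\mu_1)=(-1)^{\abs{\lambda}}w_0$ (again tracking the signs from \eqref{eq:51}), the condition $Ew_1=0$ is solved by $w_1=v^\lambda_1\otimes v^\mu_0-(-1)^{\abs{\lambda}}q^{-\mu}\,v^\lambda_0\otimes v^\mu_1$. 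Since $\langle h_1+h_2,\alpha\rangle=0$ we get $\lambda+\mu-\alpha\in\sfP'$ with $[\lambda+\mu-\alpha]=[\lambda+\mu]\neq 0$, so the same argument shows $w_1$ generates a copy of $L(\lambda+\mu-\alpha)$.

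It then remains to see that the sum of the two submodules is direct. The key observation is that $w_1$ is primitive ($Ew_1=0$), whereas $Fw_0$, which spans the weight space $\lambda+\mu-\alpha$ inside $L(\lambda+\mu)$, is \emph{not} primitive because $EFw_0=[\lambda+\mu]w_0\neq 0$; hence $w_1$ is not a scalar multiple of $Fw_0$, so $w_1\notin L(\lambda+\mu)$. As $L(\lambda+\mu-\alpha)$ is irreducible, its intersection with $L(\lambda+\mu)$ is either zero or all of it, and the latter is ruled out by $w_1\notin L(\lambda+\mu)$; therefore the intersection is zero, and a dimension count ($2+2=4$) gives $L(\lambda)\otimes L(\mu)=L(\lambda+\mu)\oplus L(\lambda+\mu-\alpha)$. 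I expect the only delicate point to be the bookkeeping of the Koszul signs from \eqref{eq:51} in the computations of $Ew_0$, $Fw_0$ and $Ew_1$; everything else is forced by the weight combinatorics and the nonvanishing of the relevant quantum integers.
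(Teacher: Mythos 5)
Your proof is correct and follows essentially the same route as the paper: both exhibit the summand $L(\lambda+\mu)$ as the span of $v^\lambda_0\otimes v^\mu_0$ and $F(v^\lambda_0\otimes v^\mu_0)$, and your primitive vector $w_1$ is, up to the sign $(-1)^{\abs{\lambda}+1}$, exactly the paper's generator $E(v^\lambda_1\otimes v^\mu_1)$ of the second summand. The only cosmetic difference is that the paper checks linear independence of the two middle-weight vectors directly, whereas you argue via primitivity ($Ew_1=0$ versus $EFw_0=[\lambda+\mu]w_0\neq 0$); both arguments are fine.
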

\begin{proof}
   Under our assumptions, the vectors
  \begin{align}
    E(v^\lambda_1 \otimes v^\mu_1) & = v^\lambda_0 \otimes q^{- \mu } v^\mu_1 + (-1)^{\abs{\lambda}+1} v^\lambda_1 \otimes
    v^\mu_0, \label{eq:174}\\
    F(v^\lambda_0 \otimes v^\mu_0) & = [\lambda]
    v^\lambda_1 \otimes v^\mu_0 + (-1)^{\abs{\lambda}} q^{\lambda} v^\lambda_0 \otimes [\mu]v^\mu_1\label{eq:175}
  \end{align}
  are linearly
  independent. One can verify easily that $v^\lambda_1 \otimes v_1^\mu$
  and $E(v^\lambda_1 \otimes v_1^\mu)$ span a module isomorphic to
  $L(\lambda+\mu-\alpha)$, while $v^\lambda_0 \otimes v^\mu_0$ and
  $F(v^\lambda_0 \otimes v^\mu_0)$ span a module isomorphic to
  $L(\lambda+\mu)$.
\end{proof}

On the other hand, we have:
\begin{lemma}
  \label{lem:2}
  Let $\lambda, \mu \in \sfP'$ and suppose $\lambda + \mu \in \Ann(h_1+h_2)$. Then the representation $M=L(\lambda) \otimes L(\mu)$ is indecomposable and has a filtration
  \begin{equation}
    \label{eq:14}
    {0}=M_0 \subset M_1 \subset M_2 \subset M
  \end{equation}
  with successive quotients
  \begin{equation}\label{eq:15}
M_1 \cong \C(q)_{\nu}, \quad M_2/M_1 \cong \C(q)_{\nu-\alpha} \oplus \C(q)_{\nu+\alpha}, \quad M/M_2 \cong \C(q)_{\nu}
\end{equation}
where $\nu=\lambda+\mu-\alpha$.

Moreover, $L(\lambda') \otimes L(\mu') \cong L(\lambda) \otimes
L(\mu)$ for any $\lambda', \mu' \in \sfP'$ 
such that $\lambda'+\mu'=\lambda+\mu$.
\end{lemma}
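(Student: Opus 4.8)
The plan is to work in the explicit $4$-dimensional module $M=L(\lambda)\otimes L(\mu)$ with ordered basis
\[
v^\lambda_0\otimes v^\mu_0,\quad v^\lambda_0\otimes v^\mu_1,\quad v^\lambda_1\otimes v^\mu_0,\quad v^\lambda_1\otimes v^\mu_1
\]
of weights $\nu+\alpha,\ \nu,\ \nu,\ \nu-\alpha$ respectively, and to read off the $\Uqgl$-action from \eqref{reps:eq:35}, the comultiplication \eqref{reps:eq:2}, and the sign rule \eqref{eq:51}. First I would record the two consequences of the hypothesis $\lambda+\mu\in\Ann(h_1+h_2)$, namely $q^\mu=q^{-\lambda}$ and $[\mu]=-[\lambda]$, and note that since every weight of $M$ lies in $\Ann(h_1+h_2)$ the central element $K$ acts as the identity, so that $EF+FE=0$ on $M$. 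A short computation with $\Delta(E)$ and $\Delta(F)$ then gives all matrix coefficients of $E$ and $F$ in this basis.

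Next I would produce the filtration. Solving $Ex=Fx=0$ on the two-dimensional weight space $M_\nu$ singles out, up to scalar, the vector $m_1=v^\lambda_1\otimes v^\mu_0-(-1)^{\abs{\lambda}}q^\lambda\,v^\lambda_0\otimes v^\mu_1$; set $M_1=\C(q)m_1$, a trivially-acted submodule isomorphic to $\C(q)_\nu$. For $M_2$ I would take $M_1$ together with the extreme weight vectors $v^\lambda_0\otimes v^\mu_0$ and $v^\lambda_1\otimes v^\mu_1$; the only point to verify is that $F(v^\lambda_0\otimes v^\mu_0)$ and $E(v^\lambda_1\otimes v^\mu_1)$ land in $M_1$, which the previous computation shows (both are scalar multiples of $m_1$). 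Since $E$ and $F$ then act as zero on $M_2/M_1$ and on $M/M_2$, reading off the weights yields $M_1\cong\C(q)_\nu$, $M_2/M_1\cong\C(q)_{\nu-\alpha}\oplus\C(q)_{\nu+\alpha}$ and $M/M_2\cong\C(q)_\nu$, as claimed.

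For indecomposability I would argue via the socle. The filtration shows that every composition factor of $M$ is one-dimensional, hence every simple submodule is one-dimensional and is therefore spanned by a weight vector annihilated by both $E$ and $F$. Checking each weight space (the extreme vectors are not killed by $F$, resp.\ $E$, and $M_\nu$ contributes only $m_1$) shows that the socle equals $M_1=\C(q)_\nu$, and is in particular simple. A finite-dimensional module with simple socle is indecomposable, which settles the first assertion.

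The heart of the matter is the final claim, which I would prove by showing that every module arising this way is isomorphic to one normal form depending only on $\nu$. Fix nonzero weight vectors $t\in M_{\nu+\alpha}$, $b\in M_{\nu-\alpha}$ and the socle generator $s=m_1\in M_\nu$, and complete to a basis by some $m\in M_\nu\setminus\C(q)s$. Using $E^2=F^2=0$ and $EF+FE=0$ together with the weight grading, one checks that $Ft$ and $Eb$ automatically lie in the socle $\C(q)s$, so the entire action is encoded by four scalars via $Ft=\beta s$, $Em=\gamma t$, $Fm=\delta b$, $Eb=\epsilon s$ (with $Et=Fb=0$ and $Es=Fs=0$). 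Applying $EF+FE=0$ to $m$ gives the single relation $\gamma\beta+\delta\epsilon=0$, while simplicity of the socle forces $\beta,\epsilon\neq0$ (else $t$ or $b$ would enlarge the socle) and then $\gamma,\delta\neq0$ (else $m$ would fall into $\C(q)s$). Rescaling $t,s,m,b$ normalizes $(\beta,\gamma,\delta)$ to $(1,1,1)$, whence $\epsilon=-1$, and this normal form depends only on the weights $\nu,\nu\pm\alpha$. Since the same analysis applies verbatim to $M'=L(\lambda')\otimes L(\mu')$, which has the same $\nu=\lambda'+\mu'-\alpha$, both modules are isomorphic to this common normal form, hence to each other. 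The main obstacle is precisely this last step: one must verify that the normalisation is legitimate, i.e.\ that none of the structure constants vanishes and that the shape of the action (that $Ft$ and $Eb$ fall into the socle) is canonical rather than an artefact of the chosen basis; note in particular that the naive approach of writing $\lambda'=\lambda+\sigma$ and tensoring with a one-dimensional $\C(q)_\sigma$ fails, since $\sigma$ need not lie in $\Ann(h_1+h_2)$.
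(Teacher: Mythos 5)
Your proof is correct. The construction of the filtration is the same computation as the paper's: the socle vector $m_1$ you exhibit is exactly (a scalar multiple of) $F(v_0^\lambda\otimes v_0^\mu)=(-1)^{\abs{\lambda}+1}[\lambda]\,E(v_1^\lambda\otimes v_1^\mu)$, and your socle argument for indecomposability supplies a point the paper leaves implicit. Where you genuinely diverge is the last claim. The paper handles it in one line: a weight vector $m$ of weight $\nu$ not proportional to $m_1$ (e.g.\ $v_0^\lambda\otimes v_1^\mu$) has $m$, $Em$, $Fm$, $FEm$ linearly independent, so $M$ is cyclic and hence isomorphic to $\Uqgl/I$ with $I$ the left ideal generated by the $\quantumq^h-q^{\langle h,\nu\rangle}$ (a four-dimensional module depending only on $\nu$), and the same holds for $L(\lambda')\otimes L(\mu')$. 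Your normal-form argument reaches the same conclusion by classifying the structure constants $(\beta,\gamma,\delta,\epsilon)$ subject to $\gamma\beta+\delta\epsilon=0$, showing none vanishes, and rescaling the basis; the steps you flag as delicate ($Ft$ and $Eb$ landing in the one-dimensional socle, non-vanishing of the constants) do all go through. The cyclic-module observation is the shorter route, while your version makes the presentation of the module completely explicit; your closing remark that the naive twist by $\C(q)_\sigma$ fails because $\sigma$ need not lie in $\Ann(h_1+h_2)$ is an apt justification for why some such argument is needed.
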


\begin{proof}
  Since $\lambda + \mu \in \Ann(h_1+h_2)$ we have $q^\lambda=q^{-\mu}$ and $[\lambda]=-[\mu]$.
  Using \eqref{eq:174} and \eqref{eq:175} we get that
  \begin{equation}
    \label{eq:16}
    F(v_0^\lambda \otimes v_0^\mu) = (-1)^{\abs{\lambda}+1} [\lambda] E(v_1^\lambda \otimes v_1^\mu).
  \end{equation}
  In particular, since $E^2=F^2=0$, the vector $F(v_0^\lambda \otimes v_0^\mu)$ generates a one-dimensional submodule $M_1\cong\C(q)_{\lambda+\mu-\alpha}$ of $M$. It follows then that the images of $v_0^\lambda \otimes v_0^\mu$ and $v_1^\lambda \otimes v_1^\mu$ in $M/M_1$ generate two one-dimensional submodules isomorphic to $\C(q)_{\lambda+\mu}$ and $\C(q)_{\lambda+\mu-2\alpha}$ respectively. Let therefore $M_2$ be the submodule of $M$ generated by $v_0^\lambda \otimes v_0^\mu$ and $v_1^\lambda \otimes v_1^\mu$. Then $M/M_2$ is a one-dimensional representation isomorphic to $\C(q)_\nu$.

The last assertion follows easily since both $L(\lambda) \otimes L(\mu)$ and $L(\lambda') \otimes L(\mu')$ are isomorphic as left $\Uqgl$-modules to $\Uqgl/I$ where $I$ is the left ideal generated by the elements $\quantumq^h-q^{\langle h, \nu\rangle}$ for $h \in \sfP$.
\end{proof}

\subsection{The dual of a representation}
\label{sec:dual-representation}

Let us consider now the dual $L(\lambda)^*$ of the
representation $L(\lambda)$ and let
$(v_0^\lambda)^*,(v_1^\lambda)^*$ be the basis dual to the
standard basis $v_0^\lambda,v_1^\lambda$, with $\abs{(v_0^\lambda)^*}=\abs{v_0}=\abs{\lambda}$ and $\abs{(v_1^\lambda)^*}=\abs{v_1}=\abs{\lambda}+1$ By explicit
computation, the action of $\Uqgl$ on $L(\lambda)^*$ is
given by:
\begin{equation}
\begin{aligned}
  E(v^\lambda_0)^*&=-(-1)^{\abs{\lambda}} q^\lambda (v^\lambda_1)^*, &  E(v^\lambda_1)^*&=0, \\
     F(v^\lambda_0)^*&=0, & F(v^\lambda_1)^*&= (-1)^{\abs{\lambda}} [\lambda]q^{-\lambda}(v^\lambda_0)^*, \\  
 \quantumq^h (v^\lambda_0)^* &=q^{-\langle
    h,\lambda \rangle} (v^\lambda_0)^*,
 & \quantumq^h (v^\lambda_1)^* &=q^{-\langle h ,\lambda-\alpha\rangle} (v^\lambda_1)^* .
\end{aligned}\label{reps:eq:32}
\end{equation}
The assignment
\begin{equation}
  \label{eq:17}
  \begin{aligned}
    L(\alpha-\lambda) & \longrightarrow L(\lambda)^* \\
    v^{\alpha-\lambda}_0 & \longmapsto -(-1)^{\abs{\lambda}} q^{\lambda} (v^\lambda_1)^*\\
    v^{\alpha-\lambda}_1 & \longmapsto (v^\lambda_0)^*
  \end{aligned}
\end{equation}
defines a $\Q(q)$-linear map which is in fact an isomorphism of $\Uqgl$-modules
\begin{equation}
  \label{reps:eq:17}
  L(\lambda)^* \cong L(\alpha-\lambda).
\end{equation}

\begin{remark}\label{rem:2}
  Together with Lemma \ref{lem:2} it follows that $L(\lambda) \otimes
  L(\lambda)^*$ is an indecomposable representation. In the filtration \eqref{eq:14}, the submodule $M_1$ is the image of the coevaluation map $\C(q) \mapto L(\lambda) \otimes L(\lambda)^*$ while the submodule $M_2$ is the kernel of the evaluation map $L(\lambda) \otimes L(\lambda)^* \mapto \C(q)$, see \eqref{eq:8} and \eqref{eq:9} below.
\end{remark}

\begin{remark}
  \label{reps:rem:3}
  At this point it is probably useful to recall that the natural
  isomorphism $V \cong V^{**}$ for a super vector space is given by $x
  \longmapsto ( \phi \mapsto (-1)^{\abs{x}\abs{\phi}} \phi(x))$.
\end{remark}

\subsection{The vector representation}
\label{reps:sec:regul-repr}

The vector representation of $\Uqgl$ is isomorphic to $L(\epsilon_1)$ with its standard basis
$v^{\epsilon_1}_0,v^{\epsilon_1}_1$ and grading given by $\abs{v^{\epsilon_1}_0}=0, \abs{v^{\epsilon_1}_1}=1$, and the
action of $\Uqgl$ is given by
\begin{equation}
\begin{aligned}
  Ev^{\epsilon_1}_0&=0, &  Fv^{\epsilon_1}_0&=v^{\epsilon_1}_1, & \quantumq^h v^{\epsilon_1}_0 &=q^{\langle h,\epsilon_1 \rangle} v^{\epsilon_1}_0, & Kv^{\epsilon_1}_0&=qv^{\epsilon_1}_0, \\
  Ev^{\epsilon_1}_1&=v^{\epsilon_1}_0, &  Fv^{\epsilon_1}_1&=0, & \quantumq^h v^{\epsilon_1}_1 &=q^{\langle h,\epsilon_2 \rangle} v^{\epsilon_1}_1, & Kv^{\epsilon_1}_1&=qv^{\epsilon_1}_1.
\end{aligned}\label{reps:eq:14}
\end{equation}

For $L(\epsilon_1)^{\otimes n}$ we obtain directly from Lemma~\ref{reps:lem:1} the following decomposition:

\begin{prop}[{\cite[Theorem~6.4]{BenkartMoon}}]\label{reps:prop:1}
  The tensor powers of $L(\epsilon_1)$ decompose as
  \begin{equation}
    L(\epsilon_1)^{\otimes m} \cong \bigoplus_{\ell=0}^{m-1} \binom{m-1}{\ell}
    L(m\epsilon_1 - \ell \alpha).
  \end{equation}
\end{prop}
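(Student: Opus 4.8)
The plan is to argue by induction on $m$, using Lemma~\ref{reps:lem:1} to peel off one tensor factor at a time. The base case $m=1$ is immediate, since the right-hand side reduces to $\binom{0}{0}L(\epsilon_1)=L(\epsilon_1)$. For the inductive step I would write $L(\epsilon_1)^{\otimes(m+1)} = L(\epsilon_1)^{\otimes m} \otimes L(\epsilon_1)$, substitute the inductive hypothesis, and distribute the tensor product over the direct sum, thereby reducing the problem to decomposing each summand $L(m\epsilon_1 - \ell\alpha) \otimes L(\epsilon_1)$.

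Before applying Lemma~\ref{reps:lem:1} I would first check that its hypotheses hold. The key computation is that $\langle h_1+h_2, \epsilon_1\rangle = 1$ while $\langle h_1+h_2, \alpha\rangle = 0$, so that $\alpha \in \Ann(h_1+h_2)$. Consequently $\langle h_1+h_2, m\epsilon_1 - \ell\alpha\rangle = m \neq 0$, which shows that $\epsilon_1$ and $m\epsilon_1-\ell\alpha$, as well as their sum $(m+1)\epsilon_1 - \ell\alpha$, all lie in $\sfP'$. Thus we are always in the \emph{generic} situation of Lemma~\ref{reps:lem:1}, never in the indecomposable case of Lemma~\ref{lem:2}, and obtain
\[
  L(m\epsilon_1 - \ell\alpha) \otimes L(\epsilon_1) \cong L((m+1)\epsilon_1 - \ell\alpha) \oplus L((m+1)\epsilon_1 - (\ell+1)\alpha).
\]

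Substituting this into the sum yields two families of summands $L((m+1)\epsilon_1 - k\alpha)$ indexed by $\ell = 0, \ldots, m-1$: the first contributes $k=\ell$ with multiplicity $\binom{m-1}{\ell}$, the second contributes $k=\ell+1$ with the same multiplicity $\binom{m-1}{\ell}$. Collecting the summands by the value of $k$, now ranging over $0, \ldots, m$, and invoking Pascal's identity $\binom{m-1}{k} + \binom{m-1}{k-1} = \binom{m}{k}$ (with the boundary terms $k=0$ and $k=m$ each receiving a contribution from only one of the two families) produces exactly the claimed coefficients $\binom{m}{k}$, completing the induction.

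The argument involves no serious obstacle: it is essentially bookkeeping organized by Pascal's triangle. The only point requiring genuine care is verifying that every weight appearing in the induction stays inside $\sfP'$, so that it is Lemma~\ref{reps:lem:1} and not Lemma~\ref{lem:2} that governs each tensor decomposition; this is guaranteed by $\langle h_1+h_2, \alpha\rangle = 0$, which keeps the relevant pairing equal to $m$ at every stage.
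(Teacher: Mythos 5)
Your proof is correct and follows exactly the route the paper intends: the paper states the decomposition "directly from Lemma~\ref{reps:lem:1}" (citing Benkart--Moon), and your induction with Pascal's identity is precisely the bookkeeping that makes this explicit. The key verification that $\langle h_1+h_2,\alpha\rangle=0$ keeps every weight in $\sfP'$ is the right point to check, and you check it correctly.
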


Let us now consider mixed tensor products, involving also the dual $L(\epsilon_1)^*$. By \eqref{reps:eq:17} we have that $L(\epsilon_1)^*$ is
isomorphic to $L(-\epsilon_2)$.
The following generalizes Proposition~\ref{reps:prop:1}:
\begin{theorem}
  \label{reps:thm:1}
  Suppose $m \neq n$. Then we have the following decomposition:
  \begin{equation}
    \label{reps:eq:16}
    L(\epsilon_1)^{\otimes m} \otimes L(\epsilon_1)^{* \otimes n} \cong
    \bigoplus_{\ell=0}^{m+n-1}
\binom{m+n-1}{\ell} L(m\epsilon_1-n\epsilon_2-\ell\alpha).
  \end{equation}
  On the other hand, we have
  \begin{equation}
    \label{eq:45}
    L(\epsilon_1)^{\otimes n } \otimes L(\epsilon_1)^{* \otimes n} \cong \bigoplus_{i=1}^{2^{\mathrlap{2n - 2}} } \big(L(\epsilon_1) \otimes L(\epsilon_1)^*\big)
  \end{equation}
  and $L(\epsilon_1) \otimes L(\epsilon_1)^*$ is indecomposable but not irreducible.
\end{theorem}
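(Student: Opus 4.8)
The plan is to split on whether the total weight meets $\Ann(h_1+h_2)$. Writing $V = L(\epsilon_1)$ and recalling from \eqref{reps:eq:17} that $V^* \cong L(-\epsilon_2)$, I first observe that since $\alpha = \epsilon_1 - \epsilon_2$ lies in $\Ann(h_1+h_2)$, every irreducible $L(m\epsilon_1 - n\epsilon_2 - \ell\alpha)$ satisfies $\langle h_1+h_2,\, m\epsilon_1 - n\epsilon_2 - \ell\alpha\rangle = m-n$. Thus the hypothesis $m\ne n$ is exactly what keeps all the weights occurring in the computation inside $\sfP'$, so that the semisimple decomposition of Lemma \ref{reps:lem:1} applies at each step, whereas $m=n$ pushes every factor into the non-semisimple regime of Lemma \ref{lem:2}. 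For $m>n$ I would induct on $n$, the base case $n=0$ being Proposition \ref{reps:prop:1}; the inductive step tensors the known decomposition of $V^{\otimes m}\otimes V^{*\otimes(n-1)}$ with $V^*$ and applies Lemma \ref{reps:lem:1} to each summand (legitimate because $m>n$ keeps both resulting weights in $\sfP'$), yielding
\begin{equation*}
L(m\epsilon_1 - (n-1)\epsilon_2 - \ell\alpha)\otimes L(-\epsilon_2)\cong L(m\epsilon_1 - n\epsilon_2 - \ell\alpha)\oplus L(m\epsilon_1 - n\epsilon_2 - (\ell+1)\alpha).
\end{equation*}
Collecting contributions to each fixed power of $\alpha$ and applying Pascal's identity $\binom{m+n-2}{\ell}+\binom{m+n-2}{\ell-1}=\binom{m+n-1}{\ell}$ reproduces the asserted multiplicities. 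The case $m<n$ then follows by applying $(-)^*$: since $S^2=\id$ gives $V^{**}\cong V$, dualizing interchanges $m$ and $n$, and \eqref{reps:eq:17} together with the symmetry $\binom{m+n-1}{\ell}=\binom{m+n-1}{m+n-1-\ell}$ matches the two sides.

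For the second part, with $m=n$, I would start from the building block $V\otimes V^* = L(\epsilon_1)\otimes L(-\epsilon_2)$: its highest weights sum to $\epsilon_1-\epsilon_2=\alpha\in\Ann(h_1+h_2)$, so Lemma \ref{lem:2} shows it is indecomposable, and being $4$-dimensional with the length-four filtration \eqref{eq:14} it is manifestly not irreducible. For general $n$ I would decompose both factors by Proposition \ref{reps:prop:1} and its dual,
\begin{equation*}
V^{\otimes n}\cong\bigoplus_{\ell=0}^{n-1}\binom{n-1}{\ell}L(n\epsilon_1-\ell\alpha),\qquad V^{*\otimes n}\cong\bigoplus_{k=0}^{n-1}\binom{n-1}{k}L(-n\epsilon_2-k\alpha),
\end{equation*}
and distribute the tensor product over the direct sums. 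Each cross term $L(n\epsilon_1-\ell\alpha)\otimes L(-n\epsilon_2-k\alpha)$ has highest weights summing to $(n-\ell-k)\alpha\in\Ann(h_1+h_2)$, so Lemma \ref{lem:2} makes it indecomposable; indeed it is the twist of $L(\epsilon_1)\otimes L(\epsilon_1)^*$ by the one-dimensional module $\C(q)_{(n-\ell-k-1)\alpha}$, hence an indecomposable, non-irreducible module of the same type. Counting with multiplicity gives $\sum_{\ell,k}\binom{n-1}{\ell}\binom{n-1}{k}=\bigl(2^{n-1}\bigr)^2=2^{2n-2}$ summands, which by Krull--Schmidt is the decomposition into indecomposables.

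The hard part will be conceptual rather than computational: recognizing that the single condition $m=n$ (equivalently, the vanishing of $\langle h_1+h_2,\cdot\rangle$ on the occurring weights) is exactly the frontier between the semisimple behaviour controlled by Lemma \ref{reps:lem:1} and the non-semisimple behaviour controlled by Lemma \ref{lem:2}. In the case $m=n$ the delicate point to state carefully is that, although each of the $2^{2n-2}$ indecomposable summands is of the same type as $L(\epsilon_1)\otimes L(\epsilon_1)^*$, they are literally isomorphic only up to a twist by a one-dimensional representation; what is canonical, and what the Krull--Schmidt argument isolates, is their number and their common indecomposable-but-not-irreducible structure.
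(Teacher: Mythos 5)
Your proof is correct and follows essentially the same route as the paper: \eqref{reps:eq:16} by induction via Lemma \ref{reps:lem:1} (the paper does not spell out how the $m<n$ case avoids passing through the non-semisimple step; your dualization handles it cleanly), and \eqref{eq:45} by reducing to Lemma \ref{lem:2} --- the paper peels a single $L(\epsilon_1)^*$ off $L(\epsilon_1)^{\otimes n}\otimes L(\epsilon_1)^{*\otimes(n-1)}$ and applies \eqref{reps:eq:16} with $(m,n)=(n,n-1)$ rather than decomposing both factors, but the substance is identical. Your caveat that the $2^{2n-2}$ summands are isomorphic to $L(\epsilon_1)\otimes L(\epsilon_1)^*$ only up to a twist by $\C(q)_{(n-\ell-k-1)\alpha}$ is in fact warranted and is a point the paper's one-line proof glosses over: the final assertion of Lemma \ref{lem:2} identifies $L(\lambda)\otimes L(\mu)$ with $L(\lambda')\otimes L(\mu')$ only when $\lambda+\mu=\lambda'+\mu'$, whereas the summands occurring here have varying $\lambda+\mu=(n-\ell-k)\alpha$ and hence distinct weight supports, so \eqref{eq:45} holds literally only after untwisting by these one-dimensional characters.
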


\begin{proof}
  The decomposition \eqref{reps:eq:16} follows from Lemma \ref{reps:lem:1} by induction. To obtain \eqref{eq:45} write $L(\epsilon_1)^{\otimes n} \otimes L(\epsilon_1)^{* \otimes n} = (L(\epsilon_1)^{\otimes n} \otimes L(\epsilon_1)^{* \otimes {n-1}}) \otimes L(\epsilon_1)^*$ and use \eqref{reps:eq:16} together with Lemma \ref{lem:2}.
\end{proof}

In particular, notice that $L(\epsilon_1)^{\otimes m} \otimes L(\epsilon_1)^{*\otimes n}$ is semisimple as long as $m \neq n$.

\section{Invariants of links}
\label{sec:invariant-links}

In this section we define the ribbon structure on the category of
representations of $\Uqgl$ and derive the corresponding invariants of
oriented framed tangles and links.

Recall that if $W$ is an $n$-dimensional complex super vector space the \emph{evaluation maps} are defined by
\begin{equation}
  \label{eq:8}
  \begin{aligned}
    \ev_W: W^* \otimes W & \longrightarrow \C(q), \qquad & \widehat \ev_W: W \otimes W^* & \longrightarrow \C(q),\\
    \phi \otimes w & \longmapsto \phi(w), & w \otimes \phi & \longmapsto (-1)^{\abs{\phi}\abs{w}} \phi(w),
  \end{aligned}
\end{equation}
and the \emph{coevalutaion maps} are defined by
\begin{equation}
\label{eq:9}
  \begin{aligned}
    \coev_W: \C(q) & \longrightarrow W \otimes W^*, &\qquad \widehat\coev_W: \C(q) & \longrightarrow W^* \otimes W,\\
   1 & \longmapsto \sum_{i=1}^n w_i \otimes w_i^*, & 1  & \longmapsto \sum_{i=1}^n (-1)^{\abs{w_i}} w_i^* \otimes w_i,
  \end{aligned}
\end{equation}
where $w_i$ is a basis of $W$ and $w_i^*$ is the corresponding dual basis of $W^*$. Note that if $\sigma_{V,W}$ denotes the map
\begin{equation}
  \label{eq:10}
  \begin{aligned}
    \sigma_{V,W} : V \otimes W &\longrightarrow W \otimes V\\
    v \otimes w & \longmapsto (-1)^{\abs{v}\abs{w}} w \otimes v.
  \end{aligned}
\end{equation}
then $\widehat \ev_W = \ev_W \circ \sigma_{W^*,W}$ and $\widehat \coev_W = \sigma_{W,W^*} \circ \coev_W$.

\subsection{Ribbon structure on $\Uqgl$-representations}
\label{sec:ribb-struct-uqgl}

Following the arguments of Tanisaki \cite{MR1187582} (see also \cite[\textsection{}10.1.D]{MR1300632}), we can construct a ribbon structure on the category of  $\Uqgl$-representations using the ribbon superalgebra structure on $\Uhgl$. We indicate now the main steps of those arguments.

The key observation is that, although $\Upsilon$ does not make sense
as an element of $\Uqgl \otimes \Uqgl$, it acts on every tensor
product $V \otimes W$ of two finite-dimensional
$\Uqgl$-modules. In other words, there is a well-defined
operator $\Upsilon_{V,W} \in \End_{\C(q)}(V \otimes W)$ determined by
setting $\Upsilon_{V,W} (v_\lambda \otimes w_\mu) = q^{(\lambda,\mu)} (v_\lambda \otimes w_\mu)$
if $v_\lambda$ and $w_\mu$ have weights $\lambda$ and $\mu$
respectively. Note however that $\Upsilon_{V,W}$ is not $\Uqgl$-equivariant, since $\Upsilon$ satisfies $\Upsilon \Delta(x) = \oDelta^\op(x) \Upsilon$ (see Lemma~\ref{lem:1}).

On the other hand, notice that the definition \eqref{reps:eq:34} of $\Theta$ makes sense also in $\Uqgl$, and \eqref{eq:5} holds in $\Uqgl$. Moreover, one has the following counterpart of equations \eqref{eq:37} and \eqref{eq:41}:
\begin{align}
  (\Delta \otimes \id) (\Theta) & = \Theta_{13} (\Upsilon_{V,Z})_{13} \Theta_{23} (\Upsilon_{V,Z}^{-1})_{13}\\
  (\id \otimes \Delta) (\Theta)  &= \Theta_{13} (\Upsilon_{V,Z})_{13} \Theta_{12} (\Upsilon_{V,Z}^{-1})_{13}.
\end{align}
This is now an equality of linear endomorphisms of $V \otimes W \otimes Z$ for all finite-dimensional $\Uqgl$-representations $V,W,Z$. Setting 
\begin{equation}
  \label{eq:46}
  R_{V,W} = \Theta \Upsilon_{V,W} \in \End_{\C(q)}(V \otimes W)
\end{equation}
one gets an operator which satisfies the Yang-Baxter equation. Note that $R_{V,W}$ is invertible, since $\Theta$ and $\Upsilon_{V,W}$ both are. Because of \eqref{reps:eq:5}, if we define $\check R_{V,W} = \sigma \circ R_{V,W}$, where $\sigma: V \otimes W \mapto W \otimes V$ is defined by $\sigma(v \otimes w) = (-1)^{\abs{v}{\abs{w}}} w \otimes v$, then we get an $\Uqgl$-equivariant isomorphism $\check R_{V,W} \in \Hom_{\Uqgl}(V \otimes W, W \otimes V)$.

Analogously, although the elements $u$ and $v$ do not make sense in $\Uqgl$, they act on each finite-dimensional $\Uqgl$-representation $V$ as operators $u_V, v_V \in \End_{\Uqgl}(V)$ (they are $\Uqgl$-equivariant because $u$, $v$ are central in $\Uhgl$). In the following, we will forget the subscripts of the operators $\check R$, $u$ and $v$.

For convenience, we give explicit formulas for the (inverse of the) operator $\check R_{L(\lambda),L(\mu)}$ for $\lambda, \mu \in \sfP'$:
\begin{equation}
\begin{aligned}
  \check R^{-1} (v^\lambda_1 \otimes v^\mu_1) & =
  (-1)^{(\abs{\lambda}+1)(\abs{\mu}+1)} q^{-(\mu-\alpha,\lambda-\alpha)} v^\mu_1
  \otimes v^\lambda_1,\\
  \check R^{-1} (v^\lambda_1 \otimes v^\mu_0) &=
  (-1)^{(\abs{\lambda}+1)\abs{\mu}} \big( q^{-(\mu,\lambda-\alpha)}v^\mu_0
  \otimes v^\lambda_1,\\
  & \phantom{=}+  (-1)^{\abs{\mu}} q^{-(\mu-\alpha,\lambda)}
  (q^{-1} -q)[\mu] v^\mu_1 \otimes v^\lambda_0\big) \\
  \check R^{-1} (v^\lambda_0 \otimes v^\mu_1) &=
  (-1)^{\abs{\lambda}(\abs{\mu}+1)} q^{-(\mu-\alpha,\lambda)}
  v^\mu_1 \otimes v^\lambda_0, \\
  \check R^{-1}(v^\lambda_0 \otimes v^\mu_0) &= (-1)^{\abs{\lambda}\abs{\mu}} q^{-(\mu,\lambda)} v^\mu_0 \otimes
  v^\lambda_0.
\end{aligned}\label{reps:eq:18}
\end{equation}

\subsection{Invariants of tangles}
\label{sec:invariant-tangles}

Let $D$ be an oriented framed tangle diagram. We will not draw the
framing because we will always suppose that it is the \emph{blackboard
  framing}. (Recall that a framing is a trivialization of the normal
bundle: since the tangle is oriented, such a trivialization is
uniquely determined by a section of the normal bundle; the blackboard
framing is the trivialization determined by the unit vector orthogonal to
the plane -- or to the blackboard -- pointing outwards.)

 We assume $D \subset \R \times [0,1]$ and we let $\bolds(D) = D \cap (\R \times 0)= \{s^D_1,\ldots,s^D_a\}$ with $s^D_1 < \cdots < s^D_a$ be the source points of $D$ and $\boldt(D)=D \cap (\R \times 1) = \{t^D_1,\ldots,t^D_b\}$ with $t^D_1 < \cdots < t^D_b$ be the target points of $D$. Let also $\boldell$ be a labeling of the strands of $D$ by simple two-dimensional representations of $\Uqgl$ (that is, a map from the set of strands of $D$ to $\sfP'$). We indicate by $\ell^s_1,\ldots,\ell^s_a$ the labeling of the strands at the source points of $D$ and by $\ell^t_1,\ldots,\ell^t_b$ the labeling at the target points. Moreover, we let $\gamma_1^s,\ldots,\gamma_a^s$ and $\gamma_1^t,\ldots,\gamma_b^t$ be the signs corresponding to the orientations of the strands at the source and target points (where $+1$ corresponds to a strand oriented upwards and $-1$ to a strand oriented downwards). 

Given these data, one can define a $\Uqgl$-equivariant map
\begin{equation}
  \label{eq:18}
  Q^\boldell(D) : L(\ell^s_1)^{\gamma^s_1} \otimes \cdots \otimes L(\ell^s_a)^{\gamma^s_a} \longrightarrow L(\ell^t_1)^{\gamma^t_1} \otimes \cdots \otimes L(\ell^t_b)^{\gamma^t_b},
\end{equation}
where $L(\lambda)^{-1}=L(\lambda)^*$, by decomposing $D$ into elementary pieces as shown below and assigning the corresponding morphisms as displayed.\label{diagrammi}
\begin{align*}
  \label{eq:7}
 & Q\left(
    \begin{tikzpicture}[anchorbase]
      \draw[->,very thick] (0,0) -- node[left] {$V$} (0,1);
    \end{tikzpicture}
\;\;\; \right)  =
\begin{tikzpicture}[anchorbase]
  \draw[->] (0,0) node[below] {$V$} -- node[left] {$\id$} (0,1) node[above] {$V$};
\end{tikzpicture} 
  &Q\left(
    \begin{tikzpicture}[anchorbase]
      \draw[<-,very thick] (0,0) -- node[left] {$V$} (0,1);
    \end{tikzpicture}
\;\;\; \right)  =
\begin{tikzpicture}[anchorbase]
  \draw[->] (0,0) node[below] {$V^*$} -- node[left] {$\id$} (0,1) node[above] {$V^*$};
  \node at (0,0) {\hphantom{$V \otimes V^*$}};
\end{tikzpicture}&\\
  &Q\left(
    \begin{tikzpicture}[anchorbase]
      \draw[->,very thick] (1,0) -- node[right,near start] {$W$} (0,2);
      \draw[->,very thick,cross line] (0,0) -- node[left,near start] {$V$} (1,2);
    \end{tikzpicture}
\right)  =
\begin{tikzpicture}[anchorbase]
  \draw[->] (0,0) node[below] {$V \otimes W$} -- node[left] {$\check R$} (0,1) node[above] {$W \otimes V$};
\end{tikzpicture} 
 &Q\left(
    \begin{tikzpicture}[anchorbase]
      \draw[->,very thick] (0,0) -- node[left,near start] {$V$} (1,2);
      \draw[->,very thick,cross line] (1,0) -- node[right,near start] {$W$} (0,2);
    \end{tikzpicture}
\right)  =
\begin{tikzpicture}[anchorbase]
  \draw[->] (0,0) node[below] {$V \otimes W$} -- node[left] {$\check R^{-1}$} (0,1) node[above] {$W \otimes V$};
\end{tikzpicture}&\\
  &Q\left(
    \begin{tikzpicture}[anchorbase]
      \draw[->,very thick] (0,0)  arc (180:0:0.5 and 0.75) node[right,midway] {$V$};
    \end{tikzpicture}
\right)  =
\begin{tikzpicture}[anchorbase]
  \draw[->] (0,0) node[below] {$V \otimes V^*$} -- node[left] {$\widehat\ev \circ (uv^{-1} \otimes \id)$} (0,1) node[above] {$\C$};
\end{tikzpicture}
  &Q\left(
    \begin{tikzpicture}[anchorbase]
      \draw[->,very thick] (0,0)  arc (0:180:0.5 and 0.75) node[left,midway] {$V$};
    \end{tikzpicture}
\right)  =
\begin{tikzpicture}[anchorbase]
  \draw[->] (0,0) node[below] {$V^* \otimes V$} -- node[left] {$\ev$} (0,1) node[above] {$\C$};
\end{tikzpicture}&\\
  &Q\left(
    \begin{tikzpicture}[anchorbase]
      \draw[<-,very thick] (0,0)  arc (360:180:0.5 and 0.75) node[left,midway] {$V$};
    \end{tikzpicture}
\right)  =
\begin{tikzpicture}[anchorbase]
  \draw[->] (0,0) node[below] {$\C$} -- node[left] {$(\id \otimes vu^{-1} ) \circ \widehat\coev $} (0,1) node[above] {$V^* \otimes V$};
\end{tikzpicture}
  &Q\left(
    \begin{tikzpicture}[anchorbase]
      \draw[<-,very thick] (0,0)  arc (180:360:0.5 and 0.75) node[right,midway] {$V$};
    \end{tikzpicture}
\right)  =
\begin{tikzpicture}[anchorbase]
  \draw[->] (0,0) node[below] {$\C$} -- node[left] {$\coev$} (0,1) node[above] {$V \otimes V^*$};
\end{tikzpicture}&\end{align*}

As we already mentioned, although $\Uqgl$ itself is not a ribbon superalgebra, its representation category is a ribbon category. Hence we have:

\begin{theorem}
  \label{thm:1}
  The map $Q^\boldell(D)$ just defined is an isotopy invariant of oriented framed tangles.
\end{theorem}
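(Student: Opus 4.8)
The plan is to recognize $Q^\boldell$ as the Reshetikhin--Turaev functor \cite{MR1036112} attached to the ribbon category of finite-dimensional $\Uqgl$-representations, whose ribbon structure was transported from the ribbon superalgebra $\Uhgl$ (Proposition~\ref{reps:prop:2}) to the representation category in Section~\ref{sec:ribb-struct-uqgl}. Since the category of oriented framed tangles is, by the coherence theorem for ribbon categories, the free ribbon category generated by a single object (see \cite{MR1321145}), any assignment of the elementary pieces that is compatible with the braiding, the duality and the twist automatically extends to a monoidal functor, and hence yields an isotopy invariant. Thus the real task is to check that the morphisms displayed after \eqref{eq:18} are exactly the structure morphisms of this ribbon category.

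Concretely I would verify, one move at a time, the oriented framed Reidemeister relations. The two crossings are assigned $\check R$ and $\check R^{-1}$ from \eqref{eq:46}, so both oriented forms of the second Reidemeister move follow from the invertibility of $\check R$, while the third Reidemeister move follows from the Yang--Baxter equation \eqref{reps:eq:7}. The cups and caps are assigned the (co)evaluation maps \eqref{eq:8}--\eqref{eq:9}, twisted by the grouplike pivotal element; note that with $v=K^{-1}u$ from \eqref{reps:eq:67} the twist factors simplify to $uv^{-1}=K$ and $vu^{-1}=K^{-1}$. The zig-zag (snake) identities then reduce to the rigidity encoded in \eqref{eq:8}--\eqref{eq:9}, the planar (framed) first Reidemeister move is realized by the action of the ribbon element $v$, and the moves that slide a crossing past a maximum or a minimum follow from the quasi-cocommutativity relation \eqref{reps:eq:5} together with the centrality of $u$ and $v$. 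Independence of the chosen slicing of $D$ is the interchange law of the monoidal category combined with the naturality of $\check R$ established after \eqref{eq:46}.

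The main obstacle is the coherent bookkeeping of the four turnback maps together with their superalgebra signs. One must check that, with the single pivotal element $K=uv^{-1}$ inserted into the cap and its inverse into the cup, all four (co)evaluations are mutually compatible, so that the zig-zag identities, the compatibility of a crossing with a critical point, and the twist produced by the first Reidemeister move hold simultaneously and with matching signs. This is precisely where the signs from \eqref{eq:51}, \eqref{eq:52} and \eqref{eq:8}--\eqref{eq:9}, the ribbon axioms \eqref{eq:44}, and the special feature $S^2=\id$ (which by \eqref{reps:eq:65} makes $u$, $v$, and hence $K=uv^{-1}$ central) must all conspire. Each individual identity is a routine, if sign-delicate, computation; verifying that they hold together is the content of the theorem.
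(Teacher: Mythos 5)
Your proposal is correct and follows essentially the same route as the paper, which likewise reduces the statement to a direct check of the framed Reidemeister moves (equivalently, the ribbon category axioms) and refers to \cite[Theorem 4.7]{MR1881401} for the details. Your additional observation that the twist factors simplify to $uv^{-1}=K$ and $vu^{-1}=K^{-1}$ is a correct and useful refinement of the bookkeeping the paper leaves implicit.
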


The proof, for which we refer to \cite[Theorem 4.7]{MR1881401}, is a direct check of the Reidemeister moves (or, more precisely, of the analogues of the Reidemeister moves for framed tangles). In fact, the axioms of a ribbon category are equivalent to the validity of these moves.

If all strands are labeled by the same simple representation $L(\lambda)$ (i.e.\ $\boldell$ is the constant map with value $\lambda$), then we write $Q^\lambda(D)$ instead of $Q^\boldell(D)$.

Let us indicate a full $+1$ twist by the symbol
\begin{equation}
  \label{eq:11}
  \begin{tikzpicture}[anchorbase]
    \draw[->,very thick] (0,0) -- node[draw, fill=white, text=black] {$1$} ++(0,2);
  \end{tikzpicture}
\;\;=\;    \begin{tikzpicture}[anchorbase]
      \draw[<-,very thick] (0,2) .. controls +(-80:1.5cm) and +(down:1cm) .. (1,1);
      \draw[very thick,cross line] (0,0) .. controls +(80:1.5cm) and +(up:1cm) .. (1,1);
  \end{tikzpicture}
\end{equation}
Then we have (cf.\ \cite[\textsection{}4.2]{MR1881401})
\begin{equation*}
Q\left(  \begin{tikzpicture}[anchorbase]
    \draw[->,very thick] (0,0) -- node[left, pos=0.2] {$V$} node[draw, fill=white, text=black] {$1$} ++(0,1.6);
  \end{tikzpicture}
\right) = \begin{tikzpicture}[anchorbase]
  \draw[->] (0,0) node[below] {$V$} -- node[left] {$v$} (0,1) node[above] {$V$};
\end{tikzpicture} 
\end{equation*}

\begin{lemma}
  \label{lem:4}
  The element $v$ acts by the identity on the vector representation $L(\epsilon_1)$ and on its dual $L(\epsilon_1)^*$.
\end{lemma}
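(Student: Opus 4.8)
The plan is to verify the statement for $L(\epsilon_1)$ by a direct computation with the explicit formula $v = (K^{-1} + (q-q^{-1})EF)\,e^{\hbar(H_2^2 - H_1^2)}$ from \eqref{reps:eq:67}, and then to deduce the assertion for the dual $L(\epsilon_1)^*$ from the general properties of the ribbon element rather than repeating the computation.

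First I would record the weights of the standard basis: from \eqref{reps:eq:14}, $v^{\epsilon_1}_0$ has weight $\epsilon_1$ and $v^{\epsilon_1}_1$ has weight $\epsilon_2$. Since $H_i$ acts on a weight vector of weight $\mu$ by the scalar $\langle h_i,\mu\rangle$, the central Gaussian factor $e^{\hbar(H_2^2-H_1^2)}$ acts by $q^{\langle h_2,\mu\rangle^2 - \langle h_1,\mu\rangle^2}$, that is by $q^{-1}$ on $v^{\epsilon_1}_0$ and by $q$ on $v^{\epsilon_1}_1$. Reading $v$ from right to left as operators, I then apply $K^{-1} + (q-q^{-1})EF$. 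Using $Kv^{\epsilon_1}_i = q\, v^{\epsilon_1}_i$, together with $EFv^{\epsilon_1}_0 = v^{\epsilon_1}_0$ (because $Fv^{\epsilon_1}_0 = v^{\epsilon_1}_1$ and $Ev^{\epsilon_1}_1 = v^{\epsilon_1}_0$) and $EFv^{\epsilon_1}_1 = 0$, the scalars collapse: on $v^{\epsilon_1}_0$ one obtains $q^{-2} + (q-q^{-1})q^{-1} = 1$, and on $v^{\epsilon_1}_1$ one obtains $q\cdot q^{-1} = 1$. Hence $v$ acts as the identity on $L(\epsilon_1)$.

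For the dual I would avoid recomputing and instead use that $v$ is even and central with $S(v) = v$, both established in Proposition~\ref{reps:prop:2}. By the definition \eqref{eq:52} of the dual action, for $\phi \in L(\epsilon_1)^*$ and $w \in L(\epsilon_1)$ one has $(v\phi)(w) = (-1)^{\abs{\phi}\abs{v}}\phi(S(v)w) = \phi(vw) = \phi(w)$, since the sign is trivial as $v$ is even and $v$ acts as the identity on $L(\epsilon_1)$. Thus $v$ acts as the identity on $L(\epsilon_1)^*$ as well. (Alternatively, one could run the same direct computation on $L(\epsilon_1)^* \cong L(-\epsilon_2)$ using \eqref{reps:eq:32}, but the argument via $S(v)=v$ is cleaner.)

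There is no real obstacle here; the only points requiring care are the operator ordering in the formula for $v$ and the correct evaluation of the factor $e^{\hbar(H_2^2-H_1^2)}$ on weight vectors, where the \emph{squares} of the weight pairings enter. Once those are handled the cancellations are immediate, and the dual case is a one-line consequence of centrality and $S(v)=v$.
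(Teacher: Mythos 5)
Your proof is correct and follows essentially the same route as the paper: a direct evaluation of the explicit formula \eqref{reps:eq:67} for $v$ on the standard basis of $L(\epsilon_1)$, followed by the observation that $S(v)=v$ handles the dual. The only (immaterial) difference is that you check both basis vectors explicitly, whereas the paper checks only $v_0^{\epsilon_1}$ and then invokes centrality of $v$ together with irreducibility of $L(\epsilon_1)$.
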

\begin{proof}
  Recall that we denote by $v^{\epsilon_1}_0$, $v^{\epsilon_1}_1$ the standard basis of $L(\epsilon_1)$. We have
  \begin{equation}
    \label{eq:12}
    \begin{aligned}
      v v_0^{\epsilon_1} & = (K^{-1} + (q-q^{-1})EF) \quantumq^{- (h_1+h_2)(h_1-h_2)}  v_0^{\epsilon_1}\\
      & = (K^{-1} + (q-q^{-1})EF) q^{-\langle h_1+h_2, \epsilon_1\rangle \langle h_1-h_2, \epsilon_1 \rangle} v_0^{\epsilon_1}\\
        & = (q^{-1} + q - q^{-1}) q^{-1} v_0^{\epsilon_1} = v_0^{\epsilon_1}.
    \end{aligned}
  \end{equation}
  Since $L(\epsilon_1)$ is irreducible and $v$ acts in an $\Uqgl$-equivariant way, it follows that $v$ acts by the identity on $L(\epsilon_1)$. Since $S(v)=v$, the element $v$ acts by the identity also on $L(\epsilon_1)^*$.
\end{proof}

As a consequence, if we label all strands of our tangles by the vector representation then we need not worry about the framing any more:

\begin{corollary}
  \label{cor:1}
  The assignment $D \mapsto Q^{\epsilon_1}(D)$ is an invariant of oriented tangles.
\end{corollary}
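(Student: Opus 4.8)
The plan is to deduce the corollary directly from Theorem~\ref{thm:1} together with Lemma~\ref{lem:4}. By Theorem~\ref{thm:1}, the assignment $D \mapsto Q^{\epsilon_1}(D)$ is already an isotopy invariant of oriented \emph{framed} tangles, hence it is preserved under the second and third Reidemeister moves and under the framed analogue of the first Reidemeister move (which cancels two oppositely curved kinks without altering the framing). The only thing separating invariance of framed tangles from invariance of ordinary oriented tangles is the genuine first Reidemeister move, that is, the move creating or deleting a single curl and thereby changing the framing by $\pm 1$. So first I would reduce the statement to checking invariance of $Q^{\epsilon_1}$ under this one move.

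The key step is to recognise that such a curl is precisely the ribbon twist. By the computation displayed just before Lemma~\ref{lem:4}, the functor $Q$ sends a full twist on a strand labelled $V$ to the operator $v \in \End_{\Uqgl}(V)$; a twist of the opposite sign, or one on an oppositely oriented strand, contributes instead $v^{-1}$, or $S(v)$ acting on the dual $V^*$. Thus, under $Q^{\epsilon_1}$, inserting or removing a curl on a strand labelled $L(\epsilon_1)$ amounts to composing the morphism with one of $v^{\pm 1}$ on $L(\epsilon_1)$, or with $S(v)^{\pm1}$ on $L(\epsilon_1)^*$. Now I would invoke Lemma~\ref{lem:4}: since $v$ acts as the identity on both $L(\epsilon_1)$ and $L(\epsilon_1)^*$, this composition leaves $Q^{\epsilon_1}(D)$ unchanged. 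This establishes invariance under the first Reidemeister move, and together with the framed invariance from Theorem~\ref{thm:1} it yields invariance under all three Reidemeister moves, which is exactly the assertion that $Q^{\epsilon_1}$ is an invariant of oriented (unframed) tangles.

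There is no genuine obstacle here; the only point requiring care is the bookkeeping of signs and orientations, since a curl may be positive or negative and may sit on an up- or down-oriented strand, producing a priori the four distinct operators $v$, $v^{-1}$, $S(v)$ and $S(v)^{-1}$. The reason no case analysis is needed is that Lemma~\ref{lem:4} already covers both $L(\epsilon_1)$ and its dual $L(\epsilon_1)^*$, so that $v$ is the identity in each case; using $S(v)=v$ (part of the ribbon axioms), all four curl-operators then coincide with the identity, and the argument goes through uniformly regardless of the curl's sign or the strand's orientation.
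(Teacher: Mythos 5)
Your argument is correct and follows exactly the route the paper intends: the corollary is stated there as an immediate consequence of Theorem~\ref{thm:1} and Lemma~\ref{lem:4}, with the framing dependence residing entirely in the curl/twist operators $v^{\pm1}$ (and their duals), all of which Lemma~\ref{lem:4} together with $S(v)=v$ identifies with the identity on $L(\epsilon_1)$ and $L(\epsilon_1)^*$. Your write-up simply makes explicit the reduction to the first Reidemeister move that the paper leaves implicit.
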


\subsection{Invariants of links}
\label{sec:invariants-links}

Since links are in particular tangles, we obtain from $Q^\boldell$ an invariant of oriented framed links;
unfortunately, this invariant is always zero:

\begin{prop}
  \label{prop:3}
  Let $L$ be a closed link diagram and $\boldell$ a labeling of its strands. Then $Q^\boldell(L)=0$.
\end{prop}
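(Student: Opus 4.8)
The plan is to reduce everything to the vanishing of the quantum dimensions of the irreducible representations labelling the strands. Since a closed link diagram has no source and no target points, the morphism $Q^\boldell(L)$ is an endomorphism of the empty tensor product $\C(q)$, hence multiplication by a scalar; the goal is to show that this scalar is $0$.

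First I would compute the quantum dimension of $L(\lambda)$, that is, the value of $Q$ on an unknot labelled by $L(\lambda)$. From \eqref{reps:eq:67} we have $v=K^{-1}u=uK^{-1}$ with $u,K$ central, so that $uv^{-1}=K$. Composing the coevaluation with $\widehat{\ev}\circ(uv^{-1}\otimes\id)$, as prescribed by the elementary cap and cup morphisms, produces the supertrace $\sum_i(-1)^{\abs{w_i}}\langle w_i^*,Kw_i\rangle$ of $K$ on $L(\lambda)$. By \eqref{reps:eq:35} the element $K$ acts by the \emph{same} scalar $q^\lambda$ on both basis vectors $v_0^\lambda$ and $v_1^\lambda$, which have opposite parities $\abs{\lambda}$ and $\abs{\lambda}+1$; the two contributions therefore cancel and the quantum dimension equals $q^\lambda\bigl((-1)^{\abs{\lambda}}+(-1)^{\abs{\lambda}+1}\bigr)=0$. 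The identical cancellation, now with $vu^{-1}=K^{-1}$, gives that the unknot labelled by $L(\lambda)^*$ also evaluates to $0$; alternatively one invokes the isomorphism \eqref{reps:eq:17}.

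Next I would pass from the unknot to an arbitrary closed link $L$ by the standard cutting trick. Choose any strand of $L$; it carries a label $\lambda\in\sfP'$, so the representation $W$ attached to it is either $L(\lambda)$ or, if the strand is oriented downwards, $L(\lambda)^*\cong L(\alpha-\lambda)$ — in both cases an \emph{irreducible} $\Uqgl$-module. Cutting $L$ at a point of this strand yields a framed $(1,1)$-tangle $T$ whose source and target both equal $W$, and by Theorem~\ref{thm:1} its invariant $Q^\boldell(T)$ is a morphism in $\End_{\Uqgl}(W)$. Since $W$ is irreducible with one-dimensional weight spaces of distinct weights, $\End_{\Uqgl}(W)\cong\C(q)$ by Schur's lemma, and hence $Q^\boldell(T)=c\,\id_W$ for some $c\in\C(q)$.

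Finally, $L$ is recovered from $T$ by reconnecting the two cut endpoints, and this reconnection is isotopic to the trace closure in which the connecting arc is routed around the outside of the diagram. By the isotopy invariance of Theorem~\ref{thm:1}, $Q^\boldell(L)$ therefore equals the quantum trace of $Q^\boldell(T)=c\,\id_W$, namely $c\cdot\qdim W$, which vanishes by the first step. The main obstacle is precisely this last step: one must justify carefully that re-closing the cut strand computes the quantum trace of $Q^\boldell(T)$, i.e.\ that the short reconnecting arc and the arc routed around infinity give isotopic diagrams and hence, by Theorem~\ref{thm:1}, the same invariant. This is the only genuinely geometric point, since all the representation-theoretic input is concentrated in the vanishing of the quantum dimension.
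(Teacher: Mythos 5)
Your proof is correct, but it follows a genuinely different route from the paper's. The paper isotopes $L$ so that at some height exactly two strands (one up, one down, with the same label $\lambda$) cross that level, writes $Q^\boldell(L)$ as a composition $\C(q)\to L(\lambda)\otimes L(\lambda)^*\to\C(q)$, and observes that a nonzero such composition would split off $\C(q)$ as a direct summand of $L(\lambda)\otimes L(\lambda)^*$, contradicting the indecomposability established in Lemma~\ref{lem:2} (via \eqref{reps:eq:17}). You instead use the vanishing of the quantum dimension: since $uv^{-1}=K$ acts by the single scalar $q^\lambda$ on the two basis vectors of $L(\lambda)$, which have opposite parities, the supertrace $\qdim L(\lambda)=0$; cutting $L$ to a $(1,1)$-tangle $T$, applying Schur's lemma to get $Q^\boldell(T)=c\,\id_W$, and re-closing then gives $Q^\boldell(L)=c\cdot\qdim W=0$. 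Both arguments are sound and both ultimately rest on the non-semisimplicity of the category; your computation is the standard quantum-topology argument, is more explicit, and has the bonus of already producing the scalar $c$ that the paper subsequently turns into the modified invariant $\hat Q^{\boldsymbol{\ell},\lambda}$ of Theorem~\ref{thm:2}, whereas the paper's argument avoids both Schur's lemma and the geometric step you rightly flag (that the closure of a $(1,1)$-tangle computes the quantum trace, which does follow from isotopy invariance as in Theorem~\ref{thm:1}), needing only the slicing observation and Lemma~\ref{lem:2}. One small point to make explicit in your write-up: depending on the orientation of the cut strand the re-closure uses $vu^{-1}=K^{-1}$ rather than $uv^{-1}=K$, but the identical parity cancellation shows $\operatorname{str}(K^{-1})=0$ as well, as you indicate.
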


\begin{proof}
  The invariant associated to $L$ is some endomorphism $\phi$ of the
  trivial representation $\C(q)$. Up to isotopy, we can assume that
  there is some level at which the link diagram $L$ has only two
  strands, one oriented upwards and the other one downwards, labeled
  by the same weight $\lambda$. Without loss of generality suppose
  that the leftmost is oriented upwards. Slice the diagram at this
  level, so that we can write $\phi$ as the composition $\phi_2 \circ
  \phi_1$ of two $\Uqgl$-equivariant maps $\phi_1: \C(q) \mapto
  L(\lambda) \otimes L(\lambda)^* $ and $\phi_2 :L(\lambda) \otimes
  L(\lambda)^* \mapto \C(q)$. If $\phi = \phi_2 \circ \phi_1$ is not
  zero, then we have an inclusion $\phi_1$ of $\C(q)$ inside
  $L(\lambda) \otimes L(\lambda)^*$ and a projection $\phi_2$ of the
  latter onto $\C(q)$, so that $\C(q)$ would be a direct summand of
  $L(\lambda) \otimes L(\lambda)^*$. But this is not possible, since
  $L(\lambda) \otimes L(\lambda)^*$ is indecomposable (by Lemma \ref{lem:2}); hence $\phi=0$.
\end{proof}

To get invariants of closed links we need to cut the links, as we are going to explain now.
First, we need the following result:

\begin{prop}
  \label{prop:5}
  Let $D$ be an oriented tangle diagram with two source points and two target points. Let $\boldell$ be a labeling of the strands of $D$ such that $\ell^s_1=\ell^s_2=\ell^t_1=\ell^t_2$. Then
  \begin{equation}
    \label{eq:20}
Q^\boldell \left(  \begin{tikzpicture}[anchorbase]
    \draw[very thick, postaction={decorate}, decoration={markings, mark=at position 0.3 with {\arrow{>}}}] (0,0.4) -- ++(0,0.7) arc (0:180:0.3 and 0.5) -- ++(0,-0.7) arc (180:360:0.3 and 0.5) -- cycle ;
    \draw[->,very thick] (0.3,0) -- ++(0,1.5);
    \node[draw, fill=white, text=black, minimum width=0.8cm] at (0.15,0.75) {$D$};
  \end{tikzpicture}\right) =
Q^\boldell \left(  \begin{tikzpicture}[anchorbase,x=-1cm]
    \draw[very thick, postaction={decorate}, decoration={markings, mark=at position 0.3 with {\arrow{>}}}] (0,0.4) -- ++(0,0.7) arc (0:180:0.3 and 0.5) -- ++(0,-0.7) arc (180:360:0.3 and 0.5) -- cycle ;
    \draw[->,very thick] (0.3,0) -- ++(0,1.5);
    \node[draw, fill=white, text=black, minimum width=0.8cm] at (0.15,0.75) {$D$};
  \end{tikzpicture}\right)
  \end{equation}
\end{prop}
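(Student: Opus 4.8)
The plan is to view both sides of \eqref{eq:20} as the values of two linear functionals on a small endomorphism algebra and to compare them on a basis. Write $f=Q^\boldell(D)$; the orientations in the diagram make this a $\Uqgl$-equivariant endomorphism of $L(\lambda)\otimes L(\lambda)$ (the strand closed off by the loop runs \emph{upward} through $D$, just like the open strand). The two pictures are then the two partial closures of $f$: the left one closes the left strand, the right one (drawn via the reflection \texttt{x=-1cm}) closes the right strand. Each closure is a $\C(q)$-linear operation in $f$ and yields a $\Uqgl$-equivariant endomorphism of the simple module $L(\lambda)$, hence by Schur's lemma a scalar. Denoting these scalars by $\tau_1(f)$ and $\tau_2(f)$, the assertion \eqref{eq:20} is exactly $\tau_1=\tau_2$ as linear functionals on $\End_{\Uqgl}(L(\lambda)\otimes L(\lambda))$.

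First I would identify this algebra. Since $\lambda\in\sfP'$ we also have $2\lambda\in\sfP'$, so Lemma~\ref{reps:lem:1} gives the semisimple decomposition $L(\lambda)\otimes L(\lambda)\cong L(2\lambda)\oplus L(2\lambda-\alpha)$ into two non-isomorphic simple modules. Hence $\End_{\Uqgl}(L(\lambda)\otimes L(\lambda))\cong\C(q)^2$ is two-dimensional, and a convenient basis is $\{\id,\check R\}$, where $\check R=\check R_{L(\lambda),L(\lambda)}$ is the equivariant braiding. Reading off its action from \eqref{reps:eq:18}, one finds that $\check R$ acts on $L(2\lambda)$ by $(-1)^{\abs{\lambda}}q^{(\lambda,\lambda)}$ and on $L(2\lambda-\alpha)$ by $-(-1)^{\abs{\lambda}}q^{(\lambda-\alpha,\lambda-\alpha)}$; these are scalars of \emph{opposite sign}, reflecting that the two summands sit in opposite $\Z/2\Z$-degrees, so they are distinct. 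Therefore $\id$ and $\check R$ are linearly independent and span.

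Then it remains to check $\tau_1=\tau_2$ on the two basis elements, and here the crucial observation is that both $\id$ and $\check R$ are represented by tangle diagrams that are invariant under the $180^\circ$ rotation interchanging the two strands. For $f=\id$ the two closures are a loop to the left of a vertical strand and a loop to the right of a vertical strand; these are isotopic, so $\tau_1(\id)=\tau_2(\id)$ by Theorem~\ref{thm:1}. (Both values are in fact the quantum dimension of $L(\lambda)$, which vanishes exactly as in the computation underlying Proposition~\ref{prop:3}, but we do not need this value.) For $f=\check R$ the two closures are the two curls obtained by closing one strand of a single crossing; the $180^\circ$ rotation of the crossing carries one curl to the other, so $\tau_1(\check R)=\tau_2(\check R)$ by Theorem~\ref{thm:1} as well. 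By linearity $\tau_1=\tau_2$ on the whole algebra, in particular on $f=Q^\boldell(D)$, which is precisely \eqref{eq:20}.

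The main obstacle is the structural reduction rather than any single computation: one must recognize that the two partial closures are \emph{linear} in $Q^\boldell(D)$ and that the semisimplicity furnished by Lemma~\ref{reps:lem:1} collapses the problem to the two symmetric diagrams $\id$ and $\check R$. The only points where the specific structure of $\Uqgl$ really enters are the verification that $\{\id,\check R\}$ is genuinely a basis—that is, that the two braiding eigenvalues differ, which here is immediate from their opposite signs—and the bookkeeping of the framing in the curl for $f=\check R$, where a ribbon twist $v$ appears but appears \emph{identically} on both closures and hence cancels without needing to be evaluated.
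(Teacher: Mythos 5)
Your proof is correct, and it rests on exactly the same structural pillars as the paper's: the partial closures are $\C(q)$-linear in $Q^\boldell(D)$, and $\End_{\Uqgl}(L(\lambda)\otimes L(\lambda))$ is two-dimensional by Lemma~\ref{reps:lem:1} (note $2\lambda\in\sfP'$ follows from $\lambda\in\sfP'$) with $\{\id,\check R\}$ a spanning set because $\check R$ is not scalar, which you correctly read off from \eqref{reps:eq:18}. Where you diverge is in the endgame. The paper expands in the \emph{idempotent} basis $e_1,e_2$ of the two isotypic projections, realizes these as formal $\C(q)$-linear combinations $E_1,E_2$ of tangle diagrams, inserts $E_1+E_2=\id$ below $D$, and then slides the closing arc from the left of the diagram to the right; the crossings picked up in that isotopy are absorbed because $\check R e_i = e_i\check R = a_i e_i$. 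You instead evaluate the two closure functionals $\tau_1,\tau_2$ directly on the basis $\{\id,\check R\}$ and observe that for each of these two elements the left and right closures are framed-isotopic diagrams (a free loop beside a strand, and the two curls of a single crossing, respectively), so Theorem~\ref{thm:1} gives $\tau_1=\tau_2$ on a basis and hence everywhere. Your route avoids the idempotents and the eigenvalue bookkeeping entirely, at the cost of invoking the standard ribbon-category fact that the two partial closures of the braiding are framed-isotopic curls (equivalently, that both partial traces of $c_{V,V}$ give the same twist); the paper's route keeps everything inside one chain of diagram manipulations but is harder to parse. Both are complete; yours is arguably the cleaner packaging of the same idea.
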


\begin{proof}
  Let $\ell_1^s=\lambda$. Then $Q^\boldell(D)= \phi$ where $\phi:
  L(\lambda) \otimes L(\lambda) \mapto L(\lambda) \otimes
  L(\lambda)$. By Lemma \ref{reps:lem:1} the representation
  $L(\lambda) \otimes L(\lambda)$ is isomorphic to the direct sum $L(2
  \lambda) \oplus L(2 \lambda - \alpha)$. Let $e_1,e_2$ be the two orthogonal idempotents corresponding to this decomposition.

  We consider formal $\C(q)$-linear combinations of tangle diagrams,
  and we extend $Q^\boldell$ to them. Since $\End_{\Uqgl}(L(\lambda) \otimes L(\lambda))$ is a two-dimensional $\C(q)$-vector space and $\check R_{\lambda,\lambda}$ is not a multiple of the identity by \eqref{reps:eq:18}, there are
  some $\C(q)$-linear combinations of tangle diagrams $E_1$ and $E_2$
  such that $Q^\boldell(E_1) = e_1$ and $Q^\boldell(E_2)=e_2$. Hence we
  can write
  \begin{equation}
    \label{eq:25}
Q^\boldell \left(  \begin{tikzpicture}[anchorbase]
    \draw[->,very thick] (0,0) -- ++(0,1.5);
    \draw[->,very thick] (0.3,0) -- ++(0,1.5);
    \node[draw, fill=white, text=black, minimum width=0.8cm] at (0.15,0.75) {$E_1$};
  \end{tikzpicture}\right)
+ Q^\boldell \left(  \begin{tikzpicture}[anchorbase]
    \draw[->,very thick] (0,0) -- ++(0,1.5);
    \draw[->,very thick] (0.3,0) -- ++(0,1.5);
    \node[draw, fill=white, text=black, minimum width=0.8cm] at (0.15,0.75) {$E_2$};
  \end{tikzpicture}\right) =
Q^\boldell \left(\,  \begin{tikzpicture}[anchorbase]
    \draw[->,very thick] (0,0) -- ++(0,1.5);
    \draw[->,very thick] (0.3,0) -- ++(0,1.5);
  \end{tikzpicture}\,\right).
  \end{equation}

Now we have
\begin{equation}
  \label{eq:26}
  \begin{aligned}
  Q^\boldell \left(  \begin{tikzpicture}[anchorbase]
    \draw[very thick, postaction={decorate}, decoration={markings, mark=at position 0.3 with {\arrow{>}}}] (0,0.4) -- ++(0,0.7) arc (0:180:0.3 and 0.5) -- ++(0,-0.7) arc (180:360:0.3 and 0.5) -- cycle ;
    \draw[->,very thick] (0.3,0) -- ++(0,1.5);
    \node[draw, fill=white, text=black, minimum width=0.8cm] at (0.15,0.75) {$D$};
  \end{tikzpicture}\right) & =
  Q^\boldell \left(  \begin{tikzpicture}[anchorbase]
    \draw[very thick, postaction={decorate}, decoration={markings, mark=at position 0.3 with {\arrow{>}}}] (0,0.4) -- ++(0,0.7) arc (0:180:0.3 and 0.5) -- ++(0,-0.7) arc (180:360:0.3 and 0.5) -- cycle ;
    \draw[->,very thick] (0.3,0) -- ++(0,1.5);
    \node[draw, fill=white, text=black, minimum width=0.8cm] at (0.15,1) {$D$};
    \node[draw, fill=white, text=black, minimum width=0.8cm] at (0.15,0.4) {$E_1$};
  \end{tikzpicture}\right) +
  Q^\boldell \left(  \begin{tikzpicture}[anchorbase]
    \draw[very thick, postaction={decorate}, decoration={markings, mark=at position 0.3 with {\arrow{>}}}] (0,0.4) -- ++(0,0.7) arc (0:180:0.3 and 0.5) -- ++(0,-0.7) arc (180:360:0.3 and 0.5) -- cycle ;
    \draw[->,very thick] (0.3,0) -- ++(0,1.5);
    \node[draw, fill=white, text=black, minimum width=0.8cm] at (0.15,1) {$D$};
    \node[draw, fill=white, text=black, minimum width=0.8cm] at (0.15,0.4) {$E_2$};
  \end{tikzpicture}\right)\\
&=   Q^\boldell \left(  \begin{tikzpicture}[anchorbase]
    \draw[very thick, postaction={decorate}, decoration={markings, mark=at position 0.4 with {\arrow{>}}}] (0.3,0) -- ++(0,1.3) arc (0:180:0.45 and 0.5) -- ++(0,-1.3) arc (180:360:0.45 and 0.5) -- cycle;
    \draw[->,cross line, very thick] (0.3,-0.5) .. controls +(0,0.2) and +(0,-0.2) .. (0,0) -- ++(0,1.4) .. controls +(0,0.2) and +(0,-0.2) .. ++(0.3,0.5);
    \node[draw, fill=white, text=black, minimum width=0.8cm] at (0.15,1) {$D$};
    \node[draw, fill=white, text=black, minimum width=0.8cm] at (0.15,0.4) {$E_1$};
  \end{tikzpicture}\right) +
  Q^\boldell \left(  \begin{tikzpicture}[anchorbase]
    \draw[very thick, postaction={decorate}, decoration={markings, mark=at position 0.4 with {\arrow{>}}}] (0.3,0) -- ++(0,1.3) arc (0:180:0.45 and 0.5) -- ++(0,-1.3) arc (180:360:0.45 and 0.5) -- cycle;
    \draw[->,cross line, very thick] (0.3,-0.5) .. controls +(0,0.2) and +(0,-0.2) .. (0,0) -- ++(0,1.4) .. controls +(0,0.2) and +(0,-0.2) .. ++(0.3,0.5);
    \node[draw, fill=white, text=black, minimum width=0.8cm] at (0.15,1) {$D$};
    \node[draw, fill=white, text=black, minimum width=0.8cm] at (0.15,0.4) {$E_2$};
  \end{tikzpicture}\right) \\
& =
  Q^\boldell \left(  \begin{tikzpicture}[anchorbase,x=-1cm]
    \draw[very thick, postaction={decorate}, decoration={markings, mark=at position 0.3 with {\arrow{>}}}] (0,0.4) -- ++(0,0.7) arc (0:180:0.3 and 0.5) -- ++(0,-0.7) arc (180:360:0.3 and 0.5) -- cycle ;
    \draw[->,very thick] (0.3,0) -- ++(0,1.5);
    \node[draw, fill=white, text=black, minimum width=0.8cm] at (0.15,1) {$D$};
    \node[draw, fill=white, text=black, minimum width=0.8cm] at (0.15,0.4) {$E_1$};
  \end{tikzpicture}\right) +
  Q^\boldell \left(  \begin{tikzpicture}[anchorbase,x=-1cm]
    \draw[very thick, postaction={decorate}, decoration={markings, mark=at position 0.3 with {\arrow{>}}}] (0,0.4) -- ++(0,0.7) arc (0:180:0.3 and 0.5) -- ++(0,-0.7) arc (180:360:0.3 and 0.5) -- cycle ;
    \draw[->,very thick] (0.3,0) -- ++(0,1.5);
    \node[draw, fill=white, text=black, minimum width=0.8cm] at (0.15,1) {$D$};
    \node[draw, fill=white, text=black, minimum width=0.8cm] at (0.15,0.4) {$E_2$};
  \end{tikzpicture}\right) =
   Q^\boldell \left(  \begin{tikzpicture}[anchorbase,x=-1cm]
    \draw[very thick, postaction={decorate}, decoration={markings, mark=at position 0.3 with {\arrow{>}}}] (0,0.4) -- ++(0,0.7) arc (0:180:0.3 and 0.5) -- ++(0,-0.7) arc (180:360:0.3 and 0.5) -- cycle ;
    \draw[->,very thick] (0.3,0) -- ++(0,1.5);
    \node[draw, fill=white, text=black, minimum width=0.8cm] at (0.15,0.75) {$D$};
  \end{tikzpicture}\right).
\end{aligned}
\end{equation}
The second equality here follows because we must have 
  \begin{equation}
    \label{eq:24}
    \check R e_1 = e_1 \check R = a_1 e_1 \qquad \text{and} \qquad \check R e_2 =  e_2 \check R  = a_2 e_2
  \end{equation}
for some $a_1,a_2 \in \C(q)$, since $e_1$ and $e_2$ project onto one-dimensional subspaces of $\End_{\Uqgl}(L(\lambda) \otimes L(\lambda))$. The penultimate equality follows by isotopy invariance.
\end{proof}

Let now $D$ be an oriented framed link diagram, $\boldell$ a labeling of its strands and $\lambda \in \sfP'$ some weight which labels some strand of $D$. By cutting one of the strands labeled by $\lambda$, we can suppose that $D$ is the closure of a tangle $\tilde D$ with one source and one target point, as in the picture
\begin{equation}
  \label{eq:27}
  \begin{tikzpicture}[anchorbase,x=-1cm]
    \node[draw, fill=white, text=black, minimum width=0.7cm, minimum height=1cm] at (0,0.75) {$D$};
  \end{tikzpicture} =
  \begin{tikzpicture}[anchorbase,x=-1cm]
    \draw[very thick, postaction={decorate}, decoration={markings, mark=at position 0.3 with {\arrow{>}}}] (0,0.4) -- ++(0,0.7) arc (0:180:0.3 and 0.5) node[right] {$L(\lambda)$} -- ++(0,-0.7) arc (180:360:0.3 and 0.5) -- cycle ;
    \node[draw, fill=white, text=black, minimum width=0.7cm] at (0,0.75) {$\tilde D$};
  \end{tikzpicture}
\end{equation}
Then we define $\hat Q^{\boldell,\lambda}(D) = c \in \C(q)$ where
\begin{equation}
  \label{eq:28}
Q^{\boldell}\left(  \begin{tikzpicture}[anchorbase,x=-1cm]
    \draw[very thick,->] (0,0) -- ++(0,1.5) node[yshift=-0.2cm,right] {$L(\lambda)$};
    \node[draw, fill=white, text=black, minimum width=0.7cm] at (0,0.75) {$\tilde D$};
  \end{tikzpicture}\right) = c \cdot \id_{L(\lambda)}
\end{equation}
We have:
\begin{theorem}
  \label{thm:2}
  The assignment $D \mapsto\hat Q^{\boldell,\lambda}(D) \in \C(q)$ is an invariant of oriented framed links.
\end{theorem}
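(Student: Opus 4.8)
The plan is to establish three things: that the scalar $c$ appearing in \eqref{eq:28} is well defined, that it does not depend on which $\lambda$-labeled strand of $D$ we choose to cut, and that it is unchanged under framed isotopy of $D$. Together these say that $D \mapsto \hat Q^{\boldell,\lambda}(D)$ is a well-defined invariant of oriented framed links. First I would record why $c$ exists and is unique. Since $\lambda \in \sfP'$, the module $L(\lambda)$ is two-dimensional and simple, so by Schur's lemma $\End_{\Uqgl}(L(\lambda)) = \C(q)\,\id_{L(\lambda)}$. By Theorem~\ref{thm:1} the morphism $Q^{\boldell}(\tilde D)$ attached to the $1$-tangle $\tilde D$ is $\Uqgl$-equivariant, hence a scalar multiple of the identity, and \eqref{eq:28} therefore determines $c \in \C(q)$ uniquely.

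The heart of the matter is independence of the cut, and here Proposition~\ref{prop:5} is the decisive tool. Given two strands $s_1, s_2$ of $D$ both labeled by $\lambda$, I would first use an ambient isotopy to bring $s_1$ and $s_2$ side by side, oriented upward, at a common horizontal level, so that $D$ is exhibited as the closure of a single $2$-$2$ tangle $D_0$ whose two strands are $s_1$ and $s_2$ and all four of whose endpoints are labeled $\lambda$. With this presentation, cutting $s_1$ while leaving the rest of $D$ closed is exactly the operation of closing off the $s_2$-strand of $D_0$, whereas cutting $s_2$ amounts to closing off the $s_1$-strand; these are precisely the two $1$-tangles compared in Proposition~\ref{prop:5}. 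That proposition asserts the two closures have equal invariant, and since each invariant is a scalar multiple of the identity on a copy of $L(\lambda)$, the two scalars coincide. Thus $c$ is the same whether we cut $s_1$ or $s_2$, and $\hat Q^{\boldell,\lambda}(D)$ is independent of the chosen cut. Note that this argument makes no distinction between $s_1, s_2$ lying on the same or on different components of $D$, since cutting both and forming $D_0$ works identically in either case.

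Finally I would verify invariance under framed isotopy of $D$. A framed Reidemeister move performed in a region of the diagram disjoint from the cut induces the very same move on the $1$-tangle $\tilde D$, so $Q^{\boldell}(\tilde D)$, and hence $c$, is unchanged by Theorem~\ref{thm:1}; if a move would interfere with the cut, I would first relocate the cut to another $\lambda$-labeled strand away from that region, which is harmless by the previous paragraph. Since every framed isotopy of $D$ is realized by a finite sequence of such moves, $\hat Q^{\boldell,\lambda}$ descends to framed isotopy classes of labeled links.

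The step I expect to be the main obstacle is the geometric reduction in the cut-independence argument: isotoping $D$ so that two arbitrary $\lambda$-labeled strands become \emph{simultaneously} the two strands of a single $2$-$2$ tangle $D_0$, with matching upward orientations and labels at all four endpoints, so that Proposition~\ref{prop:5} applies literally. Once this reduction is in place, the remainder is the equivariance and Schur argument together with the isotopy invariance already furnished by Theorem~\ref{thm:1}.
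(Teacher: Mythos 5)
Your proposal is correct and follows essentially the same route as the paper: isotopy invariance is delegated to Theorem~\ref{thm:1}, and independence of the cut is reduced to presenting the two candidate $1$-tangles as the two partial closures of a common $2$--$2$ tangle, to which Proposition~\ref{prop:5} applies. The extra remarks on Schur's lemma (well-definedness of $c$) and on relocating the cut during isotopies are sensible elaborations of points the paper leaves implicit, but the core argument is identical.
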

\begin{proof}
  Since $Q^\boldell(\tilde D)$ is an invariant of oriented framed tangles, we need only to show that $\hat Q^{\boldell,\lambda}$ is independent of how we cut $D$ to get $\tilde D$. If $\tilde D'$ is obtained by some different cutting, but always along some strand labeled by $\lambda$, then after some isotopy we must have
  \begin{equation}
    \label{eq:29}
\begin{tikzpicture}[anchorbase,x=-1cm]
    \draw[very thick,->] (0,0) -- ++(0,1.5) node[yshift=-0.2cm,right] {$L(\lambda)$};
    \node[draw, fill=white, text=black, minimum width=0.7cm] at (0,0.75) {$\tilde D$};
  \end{tikzpicture} =  \begin{tikzpicture}[anchorbase]
    \draw[very thick, postaction={decorate}, decoration={markings, mark=at position 0.3 with {\arrow{>}}}] (0,0.4) -- ++(0,0.7) arc (0:180:0.3 and 0.3) -- ++(0,-0.7) arc (180:360:0.3 and 0.3) -- cycle ;
    \draw[->,very thick] (0.3,0) -- ++(0,1.5);
    \node[draw, fill=white, text=black, minimum width=0.8cm] at (0.15,0.75) {$D^{(2)}$};
  \end{tikzpicture} \qquad \text{and} \qquad
\begin{tikzpicture}[anchorbase,x=-1cm]
    \draw[very thick,->] (0,0) -- ++(0,1.5) node[yshift=-0.2cm,right] {$L(\lambda)$};
    \node[draw, fill=white, text=black, minimum width=0.7cm] at (0,0.75) {$\tilde D'$};
  \end{tikzpicture} =  \begin{tikzpicture}[anchorbase,x=-1cm]
    \draw[very thick, postaction={decorate}, decoration={markings, mark=at position 0.3 with {\arrow{>}}}] (0,0.4) -- ++(0,0.7) arc (0:180:0.3 and 0.3) -- ++(0,-0.7) arc (180:360:0.3 and 0.3) -- cycle ;
    \draw[->,very thick] (0.3,0) -- ++(0,1.5);
    \node[draw, fill=white, text=black, minimum width=0.8cm] at (0.15,0.75) {$D^{(2)}$};
  \end{tikzpicture}
  \end{equation}
for some tangle $D^{(2)}$. By Proposition \ref{prop:5} we have then $Q^{\boldell}(\tilde D)= Q^{\boldell} (\tilde D')$.
\end{proof}

If $\boldell$ is the constant labeling by the weight $\lambda$, we write $\hat Q^\lambda$ instead of $\hat Q^{\boldell,\lambda}$. For $\lambda=\epsilon_1$ we write simply $\hat Q$. As a consequence of Corollary \ref{cor:1} and Theorem \ref{thm:2} we obtain:

\begin{corollary}
  \label{cor:2}
  The assignment $D \mapsto \hat Q(D) \in \C(q)$ is an invariant of oriented links.
\end{corollary}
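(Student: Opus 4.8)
The plan is to combine the two results we have just established: Theorem~\ref{thm:2} already tells us that $\hat Q = \hat Q^{\epsilon_1}$ is an invariant of oriented \emph{framed} links, so the only thing left to check is that $\hat Q(D)$ is independent of the framing. Recall that any two framings of a fixed oriented link are related by a finite sequence of moves, each inserting or deleting a full $\pm 1$ twist on one of the strands; it therefore suffices to verify that $\hat Q(D)$ is unchanged when such a twist is added to a strand.

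First I would unwind the definition of $\hat Q$ through the cutting procedure of \eqref{eq:27}--\eqref{eq:28}: fix a strand of $D$, cut it to obtain a framed $1$-tangle $\tilde D$ with a single source and a single target, both labeled by $L(\epsilon_1)$, so that $Q^{\epsilon_1}(\tilde D) = \hat Q(D) \cdot \id_{L(\epsilon_1)}$. The key input is then Corollary~\ref{cor:1} (equivalently Lemma~\ref{lem:4}): since the full $+1$ twist is assigned exactly the operator $v$, and $v$ acts as the identity on $L(\epsilon_1)$ and on $L(\epsilon_1)^*$, inserting or removing a twist on any strand leaves the operator $Q^{\epsilon_1}(\tilde D)$ unchanged. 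More directly, Corollary~\ref{cor:1} already asserts that $Q^{\epsilon_1}$ is a framing-independent invariant of oriented tangles, so $Q^{\epsilon_1}(\tilde D)$ depends only on the underlying unframed $1$-tangle; hence the scalar $c = \hat Q(D)$ reading off this endomorphism is itself framing-independent.

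I expect the only genuine subtlety to be the bookkeeping: one must check that a framing change on the closed diagram $D$ really does translate, under the cut, into a framing change on $\tilde D$ of the kind covered by Corollary~\ref{cor:1}, no matter which strand carries the added twist and which strand is cut. This I would handle by isotoping the twist along its strand until it lands on the open portion of $\tilde D$, invoking Theorem~\ref{thm:2}, which already guarantees that $\hat Q$ is independent of the choice of cut. With that in place, framing independence follows, and $\hat Q$ descends from an invariant of oriented framed links to an invariant of oriented, unframed links.
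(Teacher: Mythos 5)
Your proof is correct and follows exactly the route the paper takes: the corollary is stated there as an immediate consequence of Theorem~\ref{thm:2} (invariance of $\hat Q$ under framed isotopy) combined with Corollary~\ref{cor:1} and Lemma~\ref{lem:4} (the ribbon element $v$ acts as the identity on $L(\epsilon_1)$ and $L(\epsilon_1)^*$, so the invariant is framing-independent). Your extra bookkeeping about sliding the twist onto the open part of the cut tangle is a sensible elaboration of the same argument, which the paper leaves implicit.
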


\subsection{Recovering the Alexander polynomial}
\label{sec:recov-alex-polyn}

If we compute the action of the $R$-matrix on $L(\epsilon_1) \otimes L(\epsilon_1)$ we get by \eqref{reps:eq:18}, setting $v_1=v_1^{\epsilon_1}$ and $v_0=v_0^{\epsilon_1}$:
\begin{equation}
\begin{aligned}
  \check R^{-1} (v_1 \otimes v_1) & = -q v_1 \otimes v_1, & \check R^{-1} (v_1 \otimes v_0) &= v_0 \otimes v_1 +(q^{-1} -q) v_1 \otimes v_0 \\
  \check R^{-1} (v_0 \otimes v_1) &= v_1 \otimes v_0 & \check R^{-1}(v_0 \otimes v_0) &= q^{-1} v_0 \otimes
  v_0.
\end{aligned}\label{eq:30}
\end{equation}
On can easily check that
\begin{equation}
(\check R^{-1})^2 = (q^{-1}-q)\check R^{-1} + \text{Id}.\label{reps:eq:20}
\end{equation}
and hence
\begin{equation}
  \label{eq:31}
  \check R = \check R^{-1} + (q - q^{-1}) \text{Id}.
\end{equation}

It follows:
\begin{prop}
  \label{prop:6}
  The invariant of links $\hat Q$ satisfies the following skein relation
  \begin{equation}
    \label{eq:32}
  \hat Q\left(
    \begin{tikzpicture}[anchorbase]
      \draw[->,very thick] (1,0) -- (0,1);
      \draw[->,very thick,cross line] (0,0) -- (1,1);
      \draw[dashed] (0.5,0.5) circle (0.73);
    \end{tikzpicture}
\right) -
  \hat Q\left(
    \begin{tikzpicture}[anchorbase]
      \draw[->,very thick] (0,0) -- (1,1);
      \draw[->,very thick,cross line] (1,0) -- (0,1);
      \draw[dashed] (0.5,0.5) circle (0.73);
    \end{tikzpicture}
\right) = 
  (q-q^{-1}) \cdot \hat Q\left(
    \begin{tikzpicture}[anchorbase]
      \draw[->,very thick] (0,0) .. controls +(0.5,0.5) and +(0.5,-0.5) .. (0,1);
      \draw[->,very thick] (1,0) .. controls +(-0.5,0.5) and +(-0.5,-0.5) .. (1,1);
      \draw[dashed] (0.5,0.5) circle (0.73);
    \end{tikzpicture}
\right)  \end{equation}
where the pictures represent three links that differ only inside a small neighborhood of a crossing.
\end{prop}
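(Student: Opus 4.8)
The plan is to reduce the global skein relation \eqref{eq:32} to the local operator identity \eqref{eq:31} by exploiting the functoriality of the invariant $Q^{\epsilon_1}$, which is what Theorem~\ref{thm:1} provides.

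First I would observe that the three link diagrams in \eqref{eq:32} differ only inside a small disc around a single crossing, at which the two strands are labelled by $\epsilon_1$ and oriented upwards. Comparing with the assignments defining $Q$ on elementary pieces, the positive crossing (left-hand diagram) is sent by $Q^{\epsilon_1}$ to $\check R$, the negative crossing (middle diagram) to $\check R^{-1}$, and the oriented smoothing (right-hand diagram) to $\id$, all viewed as endomorphisms of $L(\epsilon_1)\otimes L(\epsilon_1)$. Thus the three diagrams are locally related exactly by the operators appearing in \eqref{eq:31}.

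Next I would choose a strand passing \emph{outside} the small disc and cut all three diagrams along this same strand. By Theorem~\ref{thm:2} the value of $\hat Q$ is independent of the choice of cut, so I may compute $\hat Q$ on each link from the resulting $1$-tangles $\tilde D_+$, $\tilde D_-$, $\tilde D_0$, which are identical except inside the disc. Decomposing each $1$-tangle into elementary pieces and using that $Q^{\epsilon_1}$ respects composition and tensor product, I obtain morphisms of the shape $\Phi \circ (\id \otimes \check R^{\pm 1} \otimes \id) \circ \Psi$ and $\Phi \circ (\id \otimes \id \otimes \id)\circ \Psi$, with the \emph{same} $\Phi$ and $\Psi$ in all three cases, since the diagrams agree away from the crossing.

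Finally, extending $Q^{\epsilon_1}$ linearly to formal $\C(q)$-combinations of tangle diagrams as in the proof of Proposition~\ref{prop:5}, the identity \eqref{eq:31} propagates through $\Phi$ and $\Psi$ to give
\[
  Q^{\epsilon_1}(\tilde D_+) - Q^{\epsilon_1}(\tilde D_-) = (q-q^{-1})\,Q^{\epsilon_1}(\tilde D_0)
\]
as endomorphisms of $L(\epsilon_1)$. Each side is a scalar multiple of $\id_{L(\epsilon_1)}$, and reading off the scalars according to \eqref{eq:28} yields precisely \eqref{eq:32}. The only real content is the local relation \eqref{eq:31}, which is already established; the main point to be careful about is that the cut, and hence the uniform factorisation into $\Phi$ and $\Psi$, can be chosen identically for all three diagrams, which is guaranteed because they coincide outside the crossing disc.
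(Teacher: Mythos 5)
Your proposal is correct and matches the paper's intent: the paper offers no written proof beyond the words ``It follows'' after establishing the local identity \eqref{eq:31}, and your argument simply supplies the routine details (cutting all three diagrams along a common strand outside the crossing disc, factoring the resulting $1$-tangles through the same $\Phi$ and $\Psi$, and using linearity of composition) that the paper leaves implicit. The only real content is indeed \eqref{eq:31}, exactly as you say.
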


We recall one of the equivalent definitions of the Alexander-Conway polynomial (\cite{MR1501429}, \cite{MR0258014}):

\begin{definition}
  \label{def:1}
  The \emph{Alexander-Conway polynomial} is the value of the assignment
  \begin{equation}
\Delta: \mathrm{Links} \mapto \Z[t^{\frac{1}{2}},t^{-\frac{1}{2}}]\label{eq:34}
\end{equation}
 defined by the following skein relations:
  \begin{gather}
  \Delta\left(
    \begin{tikzpicture}[anchorbase]
      \draw[very thick] (0.5,0.5) circle (0.3);
    \end{tikzpicture}
\right) = 1,    \label{eq:33}\\[1ex]
  \Delta\left(
    \begin{tikzpicture}[anchorbase,scale=0.5]
      \draw[->,very thick] (1,0) -- (0,1);
      \draw[->,very thick,cross line] (0,0) -- (1,1);
      \draw[dashed] (0.5,0.5) circle (0.73);
    \end{tikzpicture}
\right) -
  \Delta\left(
    \begin{tikzpicture}[anchorbase,scale=0.5]
      \draw[->,very thick] (0,0) -- (1,1);
      \draw[->,very thick,cross line] (1,0) -- (0,1);
      \draw[dashed] (0.5,0.5) circle (0.73);
    \end{tikzpicture}
\right) = 
  (t^{\frac{1}{2}}-t^{-\frac{1}{2}}) \cdot \Delta\left(
    \begin{tikzpicture}[anchorbase,scale=0.5]
      \draw[->,very thick] (0,0) .. controls +(0.5,0.5) and +(0.5,-0.5) .. (0,1);
      \draw[->,very thick] (1,0) .. controls +(-0.5,0.5) and +(-0.5,-0.5) .. (1,1);
      \draw[dashed] (0.5,0.5) circle (0.73);
    \end{tikzpicture}
\right).
  \end{gather}
\end{definition}

Notice that obviously $\hat Q\big(\begin{tikzpicture}[baseline={([yshift=-0.7ex]current bounding box.center)}]    \draw[very thick] (0.15,0) circle (0.15);\end{tikzpicture}\big ) = 1$ since $Q^{\epsilon_1}\big(\;\begin{tikzpicture}[baseline={([yshift=-0.7ex]current bounding box.center)} ] \draw[very thick,->] (0,0) -- (0,0.5); \end{tikzpicture} \;\big) = \Id$. As a consequence, we have that $Q$ is essentially the Alexander-Conway polynomial:

\begin{theorem}
  \label{thm:3}
  For all oriented links $L$ in $\R^3$ we have
  \begin{equation}
    \label{eq:35}
    \Delta(L) = \hat Q(L)\raisebox{-0.5ex}{$\big|$}_{q=t^{\frac{1}{2}}}. 
  \end{equation}
  In particular, $\hat Q(L)\in \Z[q,q^{-1}]$ is a Laurent polynomial in $q$.
\end{theorem}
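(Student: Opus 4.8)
The plan is to recognise that, after the substitution $q = t^{1/2}$, the invariant $\hat Q$ obeys precisely the two defining properties of the Alexander--Conway polynomial, and then to appeal to the uniqueness of the Conway recursion. Two of the ingredients are already available: Proposition~\ref{prop:6} gives the skein relation
\[
  \hat Q(L_+) - \hat Q(L_-) = (q - q^{-1})\,\hat Q(L_0)
\]
for any three oriented links $L_+, L_-, L_0$ coinciding outside a neighbourhood of one crossing (where they show a positive crossing, a negative crossing, and the oriented smoothing), and the remark just before the theorem records $\hat Q(\text{unknot}) = 1$. Because $\hat Q$ is a bona fide invariant of oriented links (Corollary~\ref{cor:2}), this relation is valid at every crossing of every diagram.

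First I would perform the substitution. Since $q - q^{-1}$ becomes $t^{1/2} - t^{-1/2}$ when $q = t^{1/2}$, the displayed relation turns into
\[
  \hat Q(L_+)\big|_{q=t^{1/2}} - \hat Q(L_-)\big|_{q=t^{1/2}} = (t^{1/2} - t^{-1/2})\,\hat Q(L_0)\big|_{q=t^{1/2}},
\]
which is exactly the skein relation of Definition~\ref{def:1}; combined with $\hat Q(\text{unknot})\big|_{q=t^{1/2}} = 1$, this shows that $L \mapsto \hat Q(L)\big|_{q=t^{1/2}}$ satisfies all of the defining relations of $\Delta$.

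The heart of the argument --- and the step I expect to be the main obstacle --- is the \emph{uniqueness} of a link invariant satisfying these relations. I would establish it by the classical Conway recursion, inducting lexicographically on the pair consisting of the crossing number of a diagram and the number of crossings that must be switched to turn it into a descending (hence unlink) diagram. Each crossing switch is controlled by the skein relation, in which the correction term $\hat Q(L_0)$ has strictly fewer crossings, so the recursion terminates at unlinks. The values on unlinks are themselves forced: applying the skein relation at the crossing of a Reidemeister--I kink placed on one component of the unlink $U_{k-1}$ --- both of whose crossing-changes are isotopic to $U_{k-1}$, while the oriented smoothing is $U_k$ --- yields $(q - q^{-1})\,\hat Q(U_k) = 0$, hence $\hat Q(U_k) = 0$ for $k \geq 2$, while $\hat Q(U_1) = 1$. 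Since the identical recursion with identical base cases computes $\Delta$, the two functions agree on every link; I would cite this uniqueness as classical (via the references of Definition~\ref{def:1}) rather than reprove it in full.

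Finally, the integrality assertion falls out of the same recursion read inside $\Z[q, q^{-1}]$: the unlink values $0$ and $1$ lie in $\Z[q, q^{-1}]$ and the skein step preserves this subring, so $\hat Q(L) \in \Z[q, q^{-1}]$ for every $L$. Equivalently, as $q \mapsto t^{1/2}$ is a field isomorphism $\C(q) \xrightarrow{\sim} \C(t^{1/2})$ sending $\hat Q(L)$ to $\Delta(L) \in \Z[t^{1/2}, t^{-1/2}]$, the element $\hat Q(L)$ must be a Laurent polynomial in $q$ with integer coefficients.
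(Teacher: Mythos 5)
Your proposal is correct and follows essentially the same route the paper intends: the paper gives no explicit proof of Theorem~\ref{thm:3}, but the preceding remark ($\hat Q$ of the unknot is $1$) together with Proposition~\ref{prop:6} makes clear that the argument is exactly ``$\hat Q$ satisfies the normalization and skein relation of Definition~\ref{def:1}, which determine the invariant uniquely.'' Your write-up merely fills in the standard Conway uniqueness recursion (including the correct computation $\hat Q(U_k)=0$ for $k\geq 2$ via a kink) and the integrality statement, both of which the paper leaves to the cited classical references.
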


\section{Appendix}
\label{sec:proof}

We collect here two technical lemmas which are used in Section \ref{sec:quant-envel-super} to construct the ribbon structure on $U_\hbar$. Both result follows from the explicit construction of the $R$-matrix (cf.\ \cite{MR1134942}); we include the two proofs for completeness, although they are easy calculations.

\begin{lemma}
  \label{lem:1}
The following properties hold for all $x \in \Uhgl$:
\begin{align}
     \Theta  \overline\Delta^\op(x)& = \Delta^\op(x) \Theta\label{eq:5}\\
    \Upsilon  \Delta (x) & = \overline\Delta^\op (x) \Upsilon.\label{eq:6}
\end{align}
\end{lemma}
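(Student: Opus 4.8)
The plan is to verify both identities on the topological generators $E,F,H_1,H_2$ and then propagate them to all of $\Uhgl$ by multiplicativity. For \eqref{eq:5}, if $x$ and $y$ each satisfy $\Theta\,\overline\Delta^\op(x)=\Delta^\op(x)\,\Theta$, then, using that $\overline\Delta^\op$ and $\Delta^\op$ are algebra homomorphisms,
\[
\Theta\,\overline\Delta^\op(xy)=\Theta\,\overline\Delta^\op(x)\,\overline\Delta^\op(y)=\Delta^\op(x)\,\Theta\,\overline\Delta^\op(y)=\Delta^\op(x)\,\Delta^\op(y)\,\Theta=\Delta^\op(xy)\,\Theta,
\]
and an entirely analogous computation handles \eqref{eq:6}. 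Since both sides of each identity are continuous $\C[[\hbar]]$-linear functions of their argument in the $\hbar$-adic topology, the subalgebra of solutions is closed; containing the topological generators, it must be all of $\Uhgl$. From \eqref{eq:13} and the bar involution \eqref{eq:48} I would first record the needed values $\Delta^\op(E)=K^{-1}\otimes E+E\otimes1$, $\overline\Delta^\op(E)=K\otimes E+E\otimes1$, $\Delta^\op(F)=1\otimes F+F\otimes K$, $\overline\Delta^\op(F)=1\otimes F+F\otimes K^{-1}$, and $\Delta^\op(H_i)=\overline\Delta^\op(H_i)=H_i\otimes1+1\otimes H_i$.

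For \eqref{eq:6} I would rewrite the claim as $\Upsilon\,\Delta(x)\,\Upsilon^{-1}=\overline\Delta^\op(x)$ and establish a conjugation formula for $\Upsilon=e^{\hbar T}$ with $T=H_1\otimes H_1-H_2\otimes H_2$. Setting $H_\beta=\langle H_1,\beta\rangle H_1-\langle H_2,\beta\rangle H_2$, a short computation for an element $a$ with $H_ia-aH_i=\langle H_i,\beta\rangle a$ gives $\mathrm{ad}_T(a\otimes1)=a\otimes H_\beta$; since $1\otimes H_\beta$ commutes with $T$, induction yields $\mathrm{ad}_T^{\,n}(a\otimes1)=a\otimes H_\beta^{\,n}$, and exponentiating gives $\Upsilon(a\otimes1)\Upsilon^{-1}=a\otimes e^{\hbar H_\beta}$, and symmetrically $\Upsilon(1\otimes a)\Upsilon^{-1}=e^{\hbar H_\beta}\otimes a$. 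Because $E$ has weight $\alpha$ with $e^{\hbar H_\alpha}=K$, while $F$ has weight $-\alpha$ with $e^{\hbar H_{-\alpha}}=K^{-1}$ and $K,H_i$ have weight $0$, conjugating the two summands of $\Delta(E)$ produces $E\otimes1+K\otimes E=\overline\Delta^\op(E)$, conjugating those of $\Delta(F)$ produces $F\otimes K^{-1}+1\otimes F=\overline\Delta^\op(F)$, and $\Delta(H_i)$ is manifestly $\Upsilon$-invariant; this settles \eqref{eq:6} on generators.

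For \eqref{eq:5} I would instead expand $\Theta=1+(q-q^{-1})F\otimes E$ and reduce the identity to
\[
\overline\Delta^\op(x)-\Delta^\op(x)=(q-q^{-1})\bigl(\Delta^\op(x)\,(F\otimes E)-(F\otimes E)\,\overline\Delta^\op(x)\bigr),
\]
to be checked on generators. For $x=H_i$ both sides vanish, since $F\otimes E$ commutes with $H_i\otimes1+1\otimes H_i$. For $x=E$ the left side is $(K-K^{-1})\otimes E$; on the right every term carrying a factor $E\cdot E$ dies by $E^2=0$, leaving $(q-q^{-1})(EF+FE)\otimes E$, which equals $(K-K^{-1})\otimes E$ by the relation $EF+FE=\tfrac{K-K^{-1}}{q-q^{-1}}$ of \eqref{eq:4}. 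The case $x=F$ is symmetric, uses $F^2=0$, and returns $F\otimes(K^{-1}-K)$ on both sides.

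The algebra here is routine; the one place demanding genuine care is the pervasive Koszul signs $(-1)^{\abs{b}\abs{c}}$ arising from the super tensor-product multiplication $(a\otimes b)(c\otimes d)=(-1)^{\abs{b}\abs{c}}ac\otimes bd$ and from $\sigma$ in the definitions of $\Delta^\op$ and $\overline\Delta^\op$, which must be tracked consistently — most delicately in the cross terms of $\Delta^\op(x)(F\otimes E)$ and $(F\otimes E)\overline\Delta^\op(x)$, where $E$ and $F$ step over one another. The only structural subtlety, as opposed to computational, is the passage from generators to all of $\Uhgl$, which rests on the continuity of both sides so that the closed subalgebra of solutions, containing the topological generators, exhausts $\Uhgl$.
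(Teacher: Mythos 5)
Your proposal is correct and follows essentially the same route as the paper: verify both identities on the generators $E$, $F$, $H_i$, using $E^2=F^2=0$ and $EF+FE=\frac{K-K^{-1}}{q-q^{-1}}$ for the $\Theta$-identity and the weight shift under conjugation by $\Upsilon$ for the $\Upsilon$-identity, then extend by multiplicativity of $\Delta^\op$ and $\overline\Delta^\op$ (a step the paper leaves implicit but you spell out, along with the $\hbar$-adic continuity needed for topological generators). Your reorganizations — isolating the commutator identity for $\Theta$ and phrasing the $\Upsilon$-conjugation via $\mathrm{ad}_T$ and exponentiation rather than shifting $H_i\mapsto H_i\pm 1$ inside the exponential — are cosmetic, and all the Koszul signs and intermediate values match the paper's computation.
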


\begin{proof}
  It is enough to check \eqref{eq:5} and \eqref{eq:6} on the generators. We have
  \begin{align*}
    \Theta \overline \Delta^\op(E) & = \Theta ( K \otimes E + E \otimes 1)\\
    & = K \otimes E + E \otimes 1 + (q-q^{-1}) FK \otimes E^2 - (q-q^{-1}) FE \otimes E\\
    & = K \otimes E + E \otimes 1 + (q-q^{-1}) EF \otimes E - (K-K^{-1}) \otimes E\\
  & = K^{-1} \otimes E + E \otimes 1   + (q-q^{-1}) EF \otimes E\\
& = (K^{-1} \otimes E + E \otimes 1)\Theta  = \Delta^\op(E) \Theta
  \end{align*}
  and
  \begin{align*}
    \Theta \overline \Delta^\op(F) & = \Theta ( 1 \otimes F + F \otimes K^{-1})\\
    & = 1 \otimes F + F \otimes K^{-1} + (q-q^{-1}) F \otimes EF - (q-q^{-1}) F^2 \otimes EK^{-1}\\
    & = 1 \otimes F + F \otimes K^{-1} - (q-q^{-1}) F \otimes FE + F \otimes (K-K^{-1}) \\
    & = 1 \otimes F + F \otimes K - (q-q^{-1}) F \otimes FE \\
    & = (1 \otimes F + F \otimes K)\Theta = \Delta^\op(F) \Theta
  \end{align*}
and for $i=1,2$
  \begin{align*}
    \Theta \overline \Delta^\op(H_i) & = \Theta ( 1 \otimes H_i + H_i \otimes 1)\\
    & = 1 \otimes H_i + H_i \otimes 1 + (q-q^{-1}) F \otimes EH_i + (q-q^{-1}) F H_i \otimes E \\
    & = 1 \otimes H_i + H_i \otimes 1 - (q-q^{-1})\langle H_i, \alpha \rangle  F \otimes E + (q-q^{-1}) F \otimes H_i E+\\
    & \qquad \qquad + (q-q^{-1})\langle H_i, \alpha \rangle F  \otimes E + (q-q^{-1}) H_i F \otimes E\\
    & = 1 \otimes H_i + H_i \otimes 1 + (q-q^{-1}) F \otimes H_iE + (q-q^{-1}) H_i F \otimes E \\
    & = (1 \otimes H_i + H_i \otimes 1)\Theta = \Delta^\op(H_i) \Theta.
  \end{align*}
  Moreover, we have
  \begin{align*}
    \Upsilon \Delta(E) &= e^{\hbar (H_1 \otimes H_1 - H_2 \otimes H_2)} (E \otimes K^{-1} + 1 \otimes E) \\
    & = (E \otimes K^{-1}) e^{\hbar ((H_1+1) \otimes H_1 - (H_2-1) \otimes H_2)} + (1 \otimes E) e^{\hbar (H_1 \otimes (H_1+1) - H_2 \otimes (H_2-1))} \\
    & = (E \otimes 1 + K \otimes E) e^{\hbar (H_1 \otimes H_1 - H_2 \otimes H_2)} = \overline\Delta^{\mathrm{op}} (E) \Upsilon
  \end{align*}
and
  \begin{align*}
    \Upsilon \Delta(F) &= e^{\hbar (H_1 \otimes H_1 - H_2 \otimes H_2)} (F \otimes 1 + K \otimes F) \\
    & = (F \otimes 1) e^{\hbar ((H_1-1) \otimes H_1 - (H_2+1) \otimes H_2)} + (K \otimes F) e^{\hbar (H_1 \otimes (H_1-1) - H_2 \otimes (H_2+1))} \\
    & = (F \otimes K^{-1} + 1 \otimes F) e^{\hbar (H_1 \otimes H_1 - H_2 \otimes H_2)} = \overline\Delta^{\mathrm{op}} (F) \Upsilon.
  \end{align*}
Finally, for $i=1,2$ we have $\Upsilon \Delta(H_i) = \Delta(H_i) \Upsilon$ since the elements $H_1$, $H_2$ commute with each other. Since $\overline\Delta^{\mathrm{op}}(H_i) =\Delta(H_i)$ we get $\Upsilon \Delta(H_i) = \overline \Delta^{\mathrm{op}}(H_i) \Upsilon$ and we are done.
\end{proof}

\begin{lemma}
  \label{lem:5}
  In $\Uhgl$ the following identities hold:
  \begin{align}
    \label{eq:37}
    (\Delta \otimes \id) (\Theta) & = \Theta_{13} \Upsilon_{13} \Theta_{23} \Upsilon_{13}^{-1},\\
    (\id \otimes \Delta) (\Theta) & = \Theta_{13} \Upsilon_{13} \Theta_{12} \Upsilon_{13}^{-1}.\label{eq:41}
  \end{align}
\end{lemma}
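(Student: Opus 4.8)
The plan is to verify the two identities directly, by expanding each side as an explicit element of $\Uhgl \hotimes \Uhgl \hotimes \Uhgl$ and comparing terms; no reduction to generators is needed, since \eqref{eq:37} and \eqref{eq:41} are identities between fixed elements. First I would compute the left-hand sides. Since $\Theta = 1 + (q-q^{-1}) F \otimes E$, applying $\Delta \otimes \id$ and using $\Delta(F) = F \otimes 1 + K \otimes F$ gives
\[
  (\Delta \otimes \id)(\Theta) = 1 + (q-q^{-1})\bigl( F \otimes 1 \otimes E + K \otimes F \otimes E \bigr),
\]
while $\Delta(E) = E \otimes K^{-1} + 1 \otimes E$ gives
\[
  (\id \otimes \Delta)(\Theta) = 1 + (q-q^{-1})\bigl( F \otimes E \otimes K^{-1} + F \otimes 1 \otimes E \bigr).
\]

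The heart of the argument is the conjugation of the inner factor of $\Theta$ by $\Upsilon_{13}$. I would carry this out using the characterization of $\Upsilon$ recalled after \eqref{reps:eq:34}, namely that $\Upsilon_{13}$ scales a weight vector $v \otimes w \otimes z$, with $v$ and $z$ of weights $\mu$ and $\nu$, by $q^{(\mu,\nu)}$. Because $E$ raises the weight in its leg by $\alpha$ and $F$ lowers it by $\alpha$, and because $(\mu,\alpha) = \langle h_1+h_2,\mu\rangle$ for every weight $\mu$ (an immediate check from \eqref{eq:47}), one obtains
\[
  \Upsilon_{13}\,(1 \otimes F \otimes E)\,\Upsilon_{13}^{-1} = K \otimes F \otimes E, \qquad \Upsilon_{13}\,(F \otimes E \otimes 1)\,\Upsilon_{13}^{-1} = F \otimes E \otimes K^{-1},
\]
with the factor $K^{\pm 1}$ landing in the leg not occupied by $E$ or $F$. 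Alternatively these follow by computing $e^{\hbar\, \mathrm{ad}(\cdot)}$ of the even exponent $H_1 \otimes 1 \otimes H_1 - H_2 \otimes 1 \otimes H_2$, using $H_i E - E H_i = \langle H_i,\alpha\rangle E$ and $H_i F - F H_i = -\langle H_i,\alpha\rangle F$ from \eqref{eq:4}; the super-signs are trivial since $\Upsilon$ and the $H_i$ are even.

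With these conjugations in hand, the right-hand sides factor as $\Theta_{13}$ times a conjugated copy of $\Theta$. For \eqref{eq:37},
\[
  \Theta_{13}\,\Upsilon_{13}\Theta_{23}\Upsilon_{13}^{-1} = \bigl(1 + (q-q^{-1}) F \otimes 1 \otimes E\bigr)\bigl(1 + (q-q^{-1}) K \otimes F \otimes E\bigr),
\]
whose cross term carries $E^2 = 0$ in the third leg and therefore vanishes; the remaining terms reproduce $(\Delta \otimes \id)(\Theta)$ exactly. For \eqref{eq:41} the analogous product is $\bigl(1 + (q-q^{-1}) F \otimes 1 \otimes E\bigr)\bigl(1 + (q-q^{-1}) F \otimes E \otimes K^{-1}\bigr)$, whose cross term now carries $F^2 = 0$ in the first leg and again drops out, leaving $(\id \otimes \Delta)(\Theta)$. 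The one delicate point is the conjugation by $\Upsilon_{13}$ — correctly identifying which leg absorbs $K^{\pm1}$ and confirming the vanishing of super-signs — after which the relations $E^2 = F^2 = 0$ make the cross terms collapse and the two sides coincide.
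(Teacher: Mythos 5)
Your proposal is correct and follows essentially the same route as the paper's proof: expand the left-hand side via the coproduct formulas, conjugate the inner $\Theta$-factor by $\Upsilon_{13}$ using the $H_i$-commutation relations (yielding $K\otimes F\otimes E$, resp.\ $F\otimes E\otimes K^{-1}$), and observe that the cross term in the resulting product vanishes by $E^2=0$ (resp.\ $F^2=0$). The only differences are cosmetic: you write out \eqref{eq:41} explicitly where the paper leaves it to the reader, and you supplement the algebraic conjugation computation with a weight-vector heuristic.
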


\begin{proof}
  The two computations are similar, so let us check \eqref{eq:37} and leave \eqref{eq:41} to the reader. The l.h.s.\ is simply
  \begin{equation}
    \label{eq:38}
    (\Delta \otimes \id) (\Theta) = 1 + (q-q)^{-1} F \otimes 1 \otimes E + (q-q^{-1}) K \otimes F \otimes E.
  \end{equation}
  We will now compute the r.h.s. First we have
  \begin{equation}
    \label{eq:39}
    \begin{aligned}
      \Upsilon_{13} (1 \otimes F \otimes E) \Upsilon_{13}^{-1} & = \Upsilon_{13} (1 \otimes F \otimes E) e^{- \hbar (H_1 \otimes 1 \otimes H_1)} e^{\hbar( H_2 \otimes 1 \otimes H_2)} \\
      &= \Upsilon_{13} e^{- \hbar (H_1 \otimes 1 \otimes (H_1 - 1))} (1 \otimes F \otimes E) e^{ \hbar( H_2 \otimes 1 \otimes H_2)}\\
      &=  \Upsilon_{13} e^{- \hbar (H_1 \otimes 1 \otimes (H_1 - 1))} e^{ \hbar( H_2 \otimes 1 \otimes (H_2 +1))} (1 \otimes F \otimes E)\\
      &=  K \otimes F \otimes E.
    \end{aligned}
  \end{equation}
  Therefore
  \begin{equation}
    \label{eq:40}
    \Theta_{13} \Upsilon_{13} \Theta_{23} \Upsilon_{13}^{-1} = (1+ (q-q)^{-1} F \otimes 1 \otimes E) ( 1 + (q-q)^{-1} K \otimes F \otimes E)
  \end{equation}
  coincides with \eqref{eq:38} since $E^2=0$.
\end{proof}

\providecommand{\bysame}{\leavevmode\hbox to3em{\hrulefill}\thinspace}
\providecommand{\MR}{\relax\ifhmode\unskip\space\fi MR }
\providecommand{\MRhref}[2]{%
  \href{http://www.ams.org/mathscinet-getitem?mr=#1}{#2}
}
\providecommand{\href}[2]{#2}

\end{document}